\makeatletter
\@namedef{subjclassname@2020}{\textup{2020} Mathematics Subject Classification}
\makeatother

\documentclass{amsart}
\usepackage{latexsym,rotate,eucal,cite}
\usepackage{amsmath,amsthm,amssymb,amsxtra}
\usepackage[pagewise]{lineno}
\usepackage{cite}
\usepackage{amsfonts}
\usepackage{latexsym, amssymb, amsmath,amssymb, amscd, amsfonts, epsfig, graphicx, colordvi}
\usepackage{ifpdf}
\usepackage{graphicx, epstopdf}
\usepackage{xcolor}
\usepackage{dsfont}
\usepackage[dvipsnames]{pstricks}
\usepackage{cases}
\newtheorem{theorem}{Theorem}[section]
\newtheorem{lemma}[theorem]{Lemma}
\newtheorem{proposition}[theorem]{Proposition}
\newtheorem{corollary}[theorem]{Corollary}

\theoremstyle{definition}
\newtheorem{definition}[theorem]{Definition}
\newtheorem{example}[theorem]{Example}

\newtheorem{remark}[theorem]{Remark}

\numberwithin{equation}{section}

\allowdisplaybreaks[4]

\makeatletter
\def\ExtendSymbol#1#2#3#4#5{\ext@arrow 0099{\arrowfill@#1#2#3}{#4}{#5}}
\makeatother

\begin{document}

\title{On $f$-polyharmonic maps between Riemannian manifolds}

\author{Xin Zhan}
\address{School of Mathematics and Statistics, Suzhou University of Technology,
SuZhou 215500, P. R. China} \email{zhanxin\_math@163.com}

\subjclass[2020]{Primary 53C40, 58E20; Secondary 53C42}



\keywords{$f$-polyharmonic maps; Chen's conjecture; first variational formula; $f$-$k$-harmonic functions; $f$-$k$-harmonic curves}

\begin{abstract}
This paper is devoted to a general study of $f$-polyharmonic maps of order $k$ (or $f$-$k$-harmonic maps), defined as critical points of the weighted $k$-energy functional
\[
E_{f,k}(\phi)=\frac{1}{2}\int_\Omega f  |\overline{\Delta}^{k/2}\phi|^2 dv_g.
\]
This framework provides a unifying perspective that extends previous theories including $f$-harmonic maps ($k=1$), biharmonic and $f$-biharmonic maps ($k=2$), and polyharmonic maps ($k\ge 3$ with constant $f$), with the classical harmonic maps recovered as the special case $k=1$ by setting $f\equiv \mathrm{const}$. We derive the Euler--Lagrange equation for general $f$-polyharmonic maps. As concrete applications, we classify $f$-$k$-harmonic curves with positive constant geodesic curvature in a space form $N^2(C)$ for $k=3,4$. Several explicit constructions of proper $f$-polyharmonic functions and maps are also provided, and a Liouville-type theorem is proved: every $f$-polyharmonic function on a closed Riemannian manifold is constant.
\end{abstract}

\maketitle \markboth{X. Zhan} {On $f$-polyharmonic maps between Riemannian Manifolds}

\section{Introduction}

The theory of harmonic maps is widely regarded as one of the most productive and transformative developments in contemporary differential geometry and geometric analysis. A smooth map $\phi: M \to N$ between Riemannian manifolds $(M^m, g)$ and $(N^n, h)$ is said to be \textit{harmonic} if it is a critical point of the energy functional
\begin{equation}\label{energy}
E(\phi) = \int_\Omega e(\phi) \, dv_g
\end{equation}
for any compact domain $\Omega$ of $M$, where $e(\phi):=\frac 12 |d\phi|^2 = \frac 12\text{tr}_g \phi^*h$ denotes the energy density and $dv_g$ is the volume element on $M$. The origins of harmonic map theory can be traced back to the classical work of Eells and Sampson \cite{EellsSampson1964}, which established the existence of harmonic maps from compact manifolds to non-positively curved targets via the heat flow method. This seminal paper inaugurated a rich research area that has since connected with numerous branches of mathematics, including partial differential equations, topology, complex geometry, and theoretical physics. The Euler-Lagrange equation characterizing harmonic maps is the vanishing of the \textit{tension field}
\begin{equation}\label{tension}
\tau(\phi) := \text{tr}_g \nabla d\phi = 0,
\end{equation}
which constitutes a system of second-order semi-linear elliptic partial differential equations. For comprehensive treatments of harmonic map theory, we refer to the foundational texts \cite{EellsLemaire1983, EellsSampson1964}.

As a natural generalization of harmonic maps, Eells and Sampson \cite{EellsSampson1965} introduced the concept of \textit{ES-$k$-harmonic} maps in their influential monograph (see also \cite{EellsLemaire1983,Montaldo2022}). A map $\phi: M \to N$ is called \textit{ES-$k$-harmonic} if it is a critical point of the ES-$k$-energy functional
\begin{equation}\label{ES-k-energy}
E_k^{ES}(\phi) = \frac{1}{2} \int_\Omega |(d+d^*)^k \phi|^2 \, dv_g
\end{equation}
for any compact domain $\Omega$ of $M$, where $d$ and $d^*$ denote the differential and codifferential, respectively.

Another generalization of harmonic maps is given by the so-called \textit{polyharmonic maps of order $k$} (also referred to as \textit{$k$-harmonic maps}), defined as critical points of the $k$-energy functional
\begin{equation}\label{k-energy}
E_k(\phi) = \frac{1}{2} \int_\Omega \bigl| \overline{\Delta}^{k/2} \phi \bigr|^2 \, dv_g
\end{equation}
for any compact domain $\Omega$ of $M$, where $\overline{\Delta} = -\operatorname{tr}_g(\overline{\nabla}^2)$ denotes the rough Laplace operator acting on sections of the pullback bundle $\phi^{-1}TN$. The operator $\overline{\Delta}^{k/2}$ is defined iteratively as follows:
\begin{itemize}
    \item For even $k = 2s$,
    \[
    \overline{\Delta}^{s} \phi := \underbrace{\overline{\Delta} \circ \overline{\Delta} \circ \cdots \circ \overline{\Delta}}_{s \text{ times}} \phi.
    \]
    \item For odd $k = 2s+1$,
    \[
    \overline{\Delta}^{s+\frac 1 2} \phi := \overline{\nabla} \bigl( \overline{\Delta}^{s} \phi \bigr),
    \]
    \end{itemize}
where $\overline{\nabla}$ denotes the induced connection on $\phi^{-1}TN$, and $\tau(\phi) = -\overline{\Delta} \phi$ is the tension field of $\phi$.

For $k = 2, 3$, the functionals $E_k^{ES}(\phi)$ and $E_k(\phi)$ coincide, whereas for $k \geq 4$ they differ \cite{Montaldo2022}. The case $k=2$, known as \textit{biharmonic maps}, has received particular attention and developed into an independent research field. The bienergy functional
\begin{equation}\label{bienergy}
E_2(\phi) = \frac{1}{2} \int_\Omega |\tau(\phi)|^2 \, dv_g
\end{equation}
was first systematically studied by Jiang \cite{Jiang1986}, where he derived the first and second variation formulas of $E_2(\phi)$. The Euler-Lagrange equation for biharmonic maps is given by the fourth-order elliptic equation
\begin{equation}\label{biharmonic-equation}
\tau_2(\phi):=\overline{\Delta} \tau(\phi) + \text{tr}_g R^N\big(d\phi, \tau(\phi)\big) d\phi = 0,
\end{equation}
where $R^N$ denotes the curvature tensor of the target manifold $N$. Moreover, the first and second variation formulas for \eqref{k-energy} were derived by Wang \cite{Wang1989} and Maeta \cite{Maeta-2012Osaka}, respectively.

The biharmonic equation has been extensively studied in various ambient spaces.
For an isometric immersion $\phi: M^n \to \mathbb{R}^{n+1}$ of a hypersurface into Euclidean space, it reduces to $\overline{\Delta} \mathbf{H} = 0$, where $\mathbf{H}$ denotes the mean curvature vector field. This led to the famous \textit{Chen's conjecture} \cite{Chen1991}, which asserts that the only biharmonic submanifolds in Euclidean spaces are the minimal ones. This conjecture has stimulated extensive research and has been verified in numerous special cases:
for surfaces in $\mathbb{R}^3$ \cite{Chen1991,Jiang1987};
for hypersurfaces in $\mathbb{R}^4$ \cite{Defever1998,Hasanis1995};
for hypersurfaces with at most two distinct principal curvatures \cite{Dimitric1992};
for those with at most three distinct principal curvatures \cite{Fu2015};
and more recently for hypersurfaces in $\mathbb{R}^5$ \cite{fu-hong-zhan2021} and $\mathbb{R}^6$ \cite{fu-hong-zhan2023}.
However, the general case remains open despite significant efforts.
For a comprehensive overview of the biharmonic theory and its developments, we refer the interested reader to the survey articles \cite{Chen2014-survey,Chen2026-biconservative-survey,Fetcu-Oniciuc2022-survey,Fu-Yang-Zhan2022-survey,Ou2016-survey} and to the monograph \cite{OuChen2020}.

Parallel to these developments, a weighted generalization of harmonic map theory emerged through the study of \emph{$f$-harmonic maps} \cite{EellsLemaire1978,Lichnerowicz1970}. Let $f \in C^\infty(M)$ be a smooth positive function on the domain manifold. The \emph{$f$-energy functional} is defined by
\begin{equation}\label{f-energy}
E_f(\phi) = \frac{1}{2} \int_\Omega f|d\phi|^2 \, dv_g
\end{equation}
for any compact domain $\Omega$ of $M$.
Critical points of $E_f$ are called \emph{$f$-harmonic maps}, a notion that arises naturally in various geometric and physical contexts. The Euler-Lagrange equation for $f$-harmonic maps involves the \emph{$f$-tension field}
\begin{equation}\label{f-tension}
\tau_f(\phi) := f\big[\tau(\phi) + d\phi(\nabla \ln f)\big] = 0,
\end{equation}
which modifies the classical tension field by a gradient term that couples the map with the weight function \cite{Course2024thesis,Ouakkas2010}. The properties of $f$-harmonic maps depend critically on the choice of the function $f$. In physics, $f$ is known as the coupling function and is defined as the continuum limit of the coupling constants between neighboring spins. Thus, $f$-harmonic maps are precisely the stationary solutions of the inhomogeneous Heisenberg spin system \cite{Ou2014-f-harmonic,Li-Wang2006}. The theory of $f$-harmonic maps has been extensively developed, including results on existence and geometric properties \cite{Huang2007,Ou2014-f-harmonic,Rimoldi2013}.

Building upon these foundations, the notion of \textit{$f$-biharmonic maps} was first introduced by Lu \cite{Lu2015} as a natural weighted generalization of biharmonic maps. An $f$-biharmonic map is defined as a critical point of the \emph{$f$-bienergy functional}
\begin{equation}\label{f-bienergy}
E_{f,2}(\phi) = \frac{1}{2} \int_M f \, |\tau(\phi)|^2 \, dv_g.
\end{equation}
The Euler--Lagrange equation for $E_{f,2}(\phi)$ is (see \cite[Proposition 3.6]{Lu2015})
\begin{align}\label{f-biharmonic-equation}
\tau_{f,2}(\phi):=&\overline{\Delta} \big(f\tau(\phi)\big) + \text{tr}_g R^N\big(d\phi, f\tau(\phi)\big) d\phi\\
=&f\tau_2(\phi)-(\Delta f)\tau(\phi)-2\overline{\nabla}_{\nabla f}\tau(\phi) = 0,\nonumber
\end{align}
where $\Delta=\sum\limits_{i=1}^n (e_i e_i-\nabla_{e_i}e_i)$ stands for the Beltrami Laplacian.
Subsequent research has explored various aspects of $f$-biharmonic maps, including their fundamental properties, explicit examples, and classification results for curves and submanifolds \cite{Lu2015, Luo-Ou2019, Ou2014-f-biharmonic, Ou2017}.

Observe that a unified theory for higher-order weighted polyharmonic maps has remained conspicuously absent from the literature. The present paper aims to fill this gap by initiating a rigorous treatment of \textit{$f$-polyharmonic maps of order $k$} (or \textit{$f$-$k$-harmonic maps}). Specifically, we define \textit{$f$-$k$-harmonic maps} as critical points of the \textit{$f$-$k$-energy functional}
\begin{equation}\label{f-k-energy}
E_{f,k}(\phi) = \frac{1}{2} \int_\Omega f \bigl|\overline{\Delta}^{k/2} \phi\bigr|^2 \, dv_g,
\end{equation}
for any compact domain $\Omega$ of $M$.
This unified framework significantly extends previous work: when $f$ is constant, we recover the classical polyharmonic maps studied by Wang \cite{Wang1989,Wang1990,Wang1991} and Maeta \cite{Maeta-2012Proceedings,Maeta-2012Osaka}; when $k=1$, we obtain $f$-harmonic maps; and when $k=2$, we recover $f$-biharmonic maps as special cases.

The remainder of this paper is organized as follows. Section 2 collects the necessary foundational material for $f$-polyharmonic maps between Riemannian manifolds. In Section 3, we derive the Euler--Lagrange equation for general $f$-polyharmonic maps via the first variation of the $f$-$k$-energy functional. In Section~4, we apply the $f$-$k$-harmonic equations to curves, proving the existence of proper $f$-3-harmonic and $f$-4-harmonic curves of positive constant geodesic curvature in $N^2(C)$, whereas no analogous curves arise in the proper $f$-biharmonic setting. Section 5 is devoted to the construction of several concrete radial examples of proper $f$-polyharmonic functions and maps on the punctured Euclidean space $\mathbb{R}^n \setminus \{0\}$. In Section 6, we obtain a new Liouville-type theorem, which states that every $f$-polyharmonic function on a closed Riemannian manifold is constant, whereas on the Euclidean line we construct a large family of proper $f$-$k$-harmonic functions.

\section{Some basic notations for $f$-polyharmonic maps}

In~\cite{Lu2015}, Lu introduced $f$-biharmonic maps between Riemannian manifolds and established the corresponding Euler-Lagrange equation. Motivated by this, we propose a natural generalization of $f$-polyharmonic maps. We begin by setting up some basic notations for $f$-polyharmonic maps.

Let $(M^m,g)$ and $(N^n,h)$ be Riemannian manifolds of dimensions $m$ and $n$, respectively. Denote by $\nabla$ and $\overline{\nabla}$ their Levi-Civita connections, and let $\{e_i\}_{i=1}^m$ be a local orthonormal frame on $M$. The curvature tensor $R^N$ of $N$ is defined by
$$R^N(X,Y)Z = \overline{\nabla}_X \overline{\nabla}_Y Z - \overline{\nabla}_Y \overline{\nabla}_X Z - \overline{\nabla}_{[X,Y]} Z.$$

For a smooth map $\phi:M\to N$, let $\phi^{-1}TN$ be the pullback bundle with induced connection $\overline{\nabla}^\phi$. The \emph{rough Laplacian} on sections of $\phi^{-1}TN$ is defined by
\[
\overline{\Delta}^\phi = -\sum_{i=1}^m \bigl( \overline{\nabla}^\phi_{e_i}\overline{\nabla}^\phi_{e_i} - \overline{\nabla}^\phi_{\nabla_{e_i}e_i}\bigr).
\]
When no confusion arises, we denote $\overline{\nabla}^{\phi}$ and $\overline{\Delta}^{\phi}$ simply by $\overline{\nabla}$ and $\overline{\Delta}$, respectively.

Now take a positive function $f\in C^\infty(M)$. For any integer $k\ge 1$, we define a family of \emph{$f$-polyenergy functionals} $\{E_{f,k}(\phi)\}_{k\ge 1}$ (see Definition~\ref{def:fkenergy}) that unify and extend the standard $k$-energy $E_k(\phi)$, as well as the $f$-energy $E_f(\phi)$ and the $f$-bienergy $E_{f,2}(\phi)$.

\begin{definition}\label{def:fkenergy}
Let $\phi: M^m \rightarrow N^n$ be a smooth map between two Riemannian manifolds $(M^m, g)$ and $(N^n, h)$. For $s \geq 1$, the \emph{$f$-$2s$-energy functional} of $\phi$ is defined by
\begin{align}\label{E-2s}
  E_{f,2s}(\phi)
  &= \frac{1}{2} \int_\Omega f \Bigl\langle \underbrace{(d^*d)\cdots (d^*d)}_{s\text{ times}}\phi,
                                          \underbrace{(d^*d)\cdots (d^*d)}_{s\text{ times}}\phi \Bigr\rangle \, dv_g \\
  &= \frac{1}{2} \int_\Omega f \bigl\langle \overline{\Delta}^{\,s-1}\tau(\phi),
                                          \overline{\Delta}^{\,s-1}\tau(\phi) \bigr\rangle \, dv_g \nonumber
\end{align}
for any compact domain $\Omega$ of $M$. For $s \geq 0$, the \emph{$f$-$(2s+1)$-energy functional} is defined by
\begin{align}\label{E-2s+1}
  E_{f,2s+1}(\phi)
  &= \frac{1}{2} \int_\Omega f \Bigl\langle d\underbrace{(d^*d)\cdots (d^*d)}_{s\text{ times}}\phi,
                                          d\underbrace{(d^*d)\cdots (d^*d)}_{s\text{ times}}\phi \Bigr\rangle \, dv_g \\
  &= \frac{1}{2} \int_\Omega f \bigl\langle \overline{\nabla}\, \overline{\Delta}^{\,s-1}\tau(\phi),
                                          \overline{\nabla}\,\overline{\Delta}^{\,s-1}\tau(\phi) \bigr\rangle \, dv_g \nonumber
\end{align}
for any compact domain $\Omega$ of $M$. Here $\overline{\Delta}^0=id$, $\overline{\Delta}^{-1}\tau(\phi) = - \phi$ and $\overline{\nabla}_X \phi = d\phi(X)$.
\end{definition}

A smooth map $\phi: M \to N$ is called \emph{$f$-polyharmonic of order $k$} (or \emph{$f$-$k$-harmonic}) if it is a critical point of $E_{f,k}$ with respect to all compactly supported variations.
When $\phi: M \to N$ is an isometric immersion, $M$ is called an \emph{$f$-polyharmonic submanifold of order $k$} (or \emph{$f$-$k$-harmonic submanifold}).
An $f$-$k$-harmonic map or submanifold is called \emph{proper} if it is not $k$-harmonic.

Throughout this paper, we adopt the following conventions unless otherwise stated.
\begin{itemize}
    \item $M$ and $N$ are smooth, connected Riemannian manifolds of dimensions $m$ and $n$, respectively;
    \item Indices $i,j,\dots$ run from $1$ to $m = \dim M$;
    \item The index $l$ runs from $1$ to $k-1$ (whenever $k\ge 2$);
    \item The Einstein summation convention is used unless otherwise stated.
\end{itemize}
These basic notations will be essential for the derivation of the Euler-Lagrange equations in Section 3 and for the concrete applications in Sections 4, 5, and 6.

\section{The first variational formula for $E_{f,k}(\phi)$}

In this section, we derive the first variational formula for the $f$-$k$-energy functional $E_{f,k}(\phi)$.

Let $\phi: (M^m,g) \to (N^n,h)$ be a smooth map between Riemannian manifolds.
For $\varepsilon>0$ sufficiently small, let $\{\phi_t\}_{t\in(-\varepsilon,\varepsilon)}$ be a smooth compactly supported variation of $\phi$, and denote by $\Phi: M \times (-\varepsilon,\varepsilon) \to N$ the map given by $\Phi(p,t) = \phi_t(p)$, with $\Phi(p,0) = \phi(p)$.
The corresponding variational vector field $V$ is a section of $\phi^{-1}TN$ given by
\[
V=\left.\frac{d}{dt}\right|_{t=0}\phi_t\in\Gamma(\phi^{-1}TN).
\]

For convenience, we shall abuse notation and denote by $\nabla$ and $\overline{\nabla}$ the Levi-Civita connections on the tangent bundle $T(M^m\times(-\varepsilon,\varepsilon))$ and the pullback bundle $\Phi^{-1}TN$, respectively. Similarly, we denote by $\overline{\Delta}$ the rough Laplacian on $\Phi^{-1}TN$.
The derivation of the first variational formula for $f$-$k$-harmonic maps relies on the following lemmas.
\begin{lemma}\label{lem-1} {\rm (c.f. \cite{Maeta-2012Proceedings,Maeta-2012Osaka,Wang1989})}
\begin{align}
  &\overline{\nabla}_{\frac{\partial}{\partial t}}\, \overline{\Delta}^{\,s-1}\tau(\phi_t)\Big|_{t=0} \label{L-1-1} \\
  ={}& -\overline{\Delta}^{\,s}V
     + \sum_{j=1}^{m}\overline{\Delta}^{\,s-1}R^N\big( V , d\phi(e_j) \big)d\phi(e_j)  \nonumber \\
  &   + \sum_{l=1}^{s-1}\sum_{j=1}^{m} \overline{\Delta}^{\,l-1}\Big\{
        R^N\big(V, d\phi(\nabla_{e_j}e_j)\big)\overline{\Delta}^{\,s-1-l}\tau(\phi) \nonumber \\
  &\quad   - \overline{\nabla}_{e_j}R^N\big( V , d\phi(e_j) \big)\overline{\Delta}^{\,s-1-l}\tau(\phi)  \nonumber \\
  &\quad   - R^N\big( V , d\phi(e_j) \big)\overline{\nabla}_{e_j}\, \overline{\Delta}^{\,s-1-l}\tau(\phi)
     \Big\} , \nonumber\\[6pt]
  &\overline{\nabla}_{\frac{\partial}{\partial t}} \overline{\nabla}_{e_i} \overline{\Delta}^{\,s-1} \tau(\phi_t) \Big|_{t=0} \label{L-1-2}\\
  ={}& -\overline{\nabla}_{e_i} \overline{\Delta}^{\,s} V
     + \sum_{j=1}^{m}\overline{\nabla}_{e_i} \overline{\Delta}^{\,s-1} R^N (V, d\phi(e_j)) d\phi(e_j) \nonumber\\
  &+ \sum_{l=1}^{s-1} \sum_{j=1}^{m}\overline{\nabla}_{e_i} \overline{\Delta}^{\,l-1} \Big\{
         R^N (V, d\phi(\nabla_{e_j} e_j)) \overline{\Delta}^{\,s-1-l} \tau(\phi)\nonumber\\
  &\quad -\overline{\nabla}_{e_j} R^N (V, d\phi(e_j)) \overline{\Delta}^{\,s-1-l} \tau(\phi) \nonumber\\
  &\quad - R^N (V, d\phi(e_j)) \overline{\nabla}_{e_j} \overline{\Delta}^{\,s-1-l} \tau(\phi)
        \Big\} \nonumber\\
  & +  R^N (V, d\phi(e_i)) \overline{\Delta}^{\,s-1} \tau(\phi).   \nonumber
\end{align}
\end{lemma}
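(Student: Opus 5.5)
The plan is to prove \eqref{L-1-1} by induction on $s$ and then obtain \eqref{L-1-2} from it by one more commutation. Everything rests on two commutation identities on $\Phi^{-1}TN$ over $M\times(-\varepsilon,\varepsilon)$.

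First, since $[\partial_t, e_i]=0$ (we extend $e_i$ independently of $t$), the definition of curvature gives, for any section $W$,
\[
\overline{\nabla}_{\partial_t}\overline{\nabla}_{e_i}W=\overline{\nabla}_{e_i}\overline{\nabla}_{\partial_t}W+R^N\big(d\Phi(\partial_t),d\Phi(e_i)\big)W .
\]
Second, the torsion-free property gives $\overline{\nabla}_{\partial_t}d\Phi(X)=\overline{\nabla}_X d\Phi(\partial_t)$ for $X$ tangent to $M$. At $t=0$ we have $d\Phi(\partial_t)=V$.

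Applying the first identity twice to $\overline{\Delta}W=-\sum_j(\overline{\nabla}_{e_j}\overline{\nabla}_{e_j}W-\overline{\nabla}_{\nabla_{e_j}e_j}W)$ gives the basic commutator
\begin{align*}
\overline{\nabla}_{\partial_t}\overline{\Delta}W={}&\overline{\Delta}\,\overline{\nabla}_{\partial_t}W+\sum_j\Big\{R^N\big(V,d\phi(\nabla_{e_j}e_j)\big)W-\overline{\nabla}_{e_j}\big(R^N(V,d\phi(e_j))W\big)\\
&-R^N\big(V,d\phi(e_j)\big)\overline{\nabla}_{e_j}W\Big\}.
\end{align*}
One must be careful about what $\overline{\nabla}_{e_j}R^N(\cdot)W$ means. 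I would read it as the derivative of the full expression $R^N(V,d\phi(e_j))W$, which matches the grouping in the statement.

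For the base case $s=1$, the classical computation applies: using the second identity, $\overline{\nabla}_{\partial_t}\tau(\phi_t)|_{t=0}=-\overline{\Delta}V+\sum_jR^N(V,d\phi(e_j))d\phi(e_j)$. This is \eqref{L-1-1} with an empty $l$-sum.

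For the inductive step, write $\overline{\Delta}^{s}\tau=\overline{\Delta}(\overline{\Delta}^{s-1}\tau)$ and apply the commutator with $W=\overline{\Delta}^{s-1}\tau(\phi)$.
\begin{itemize}
\item The term $\overline{\Delta}\,\overline{\nabla}_{\partial_t}\overline{\Delta}^{s-1}\tau$ is handled by the inductive hypothesis. Applying $\overline{\Delta}$ raises every power by one, so $\overline{\Delta}^{s}V\mapsto\overline{\Delta}^{s+1}V$, $\overline{\Delta}^{s-1}R^N(\cdots)\mapsto\overline{\Delta}^{s}R^N(\cdots)$, and the summand indexed by $l$ carries $\overline{\Delta}^{l}$ in front of $\{\cdots\}$ with $\overline{\Delta}^{s-1-l}\tau$ inside.
\item The curvature bracket is exactly the $l=1$ term of the new formula, with $\overline{\Delta}^{0}$ in front and $\overline{\Delta}^{s-1}\tau$ inside.
\end{itemize}
Reindexing the old terms by $l\mapsto l+1$ then gives exactly the sum over $l=1,\dots,s$ with $\overline{\Delta}^{l-1}\{\cdots\overline{\Delta}^{s-l}\tau\cdots\}$, which is the formula at level $s+1$.

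For \eqref{L-1-2}, apply the first identity with $W=\overline{\Delta}^{s-1}\tau(\phi_t)$:
\[
\overline{\nabla}_{\partial_t}\overline{\nabla}_{e_i}W=\overline{\nabla}_{e_i}\overline{\nabla}_{\partial_t}W+R^N(V,d\phi(e_i))\overline{\Delta}^{s-1}\tau(\phi).
\]
Substituting \eqref{L-1-1} for $\overline{\nabla}_{\partial_t}W$ and distributing $\overline{\nabla}_{e_i}$ gives \eqref{L-1-2} immediately.

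The main obstacle is bookkeeping rather than ideas. Three things need care: the index shift in the induction, the interpretation of $\overline{\nabla}_{e_j}$ acting on the curvature expression, and evaluating at $t=0$ only after all derivatives are taken, so that $d\Phi(\partial_t)$ may be replaced by $V$. One should also check that the sign convention $\overline{\Delta}=-\mathrm{tr}\,\overline{\nabla}^2$ yields the leading term $-\overline{\Delta}^{s}V$. The lemma could also be cited from \cite{Maeta-2012Osaka,Wang1989}, but the induction above is self-contained.
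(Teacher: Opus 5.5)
Your proposal is correct and is essentially the standard argument: the paper gives no proof of this lemma and defers to Wang and Maeta, whose proof is the same induction on $s$ via the commutator of $\overline{\nabla}_{\partial_t}$ with $\overline{\Delta}$, followed by one more commutation with $\overline{\nabla}_{e_i}$ for \eqref{L-1-2}. Your reading of $\overline{\nabla}_{e_j}R^N(V,d\phi(e_j))W$ as the derivative of the whole section is the right one, since it is the reading under which Lemma~\ref{lem-3} holds. Note also that the commutation must be applied with $\nabla_{e_j}e_j$ in place of $e_i$, which uses $[\partial_t,\nabla_{e_j}e_j]=0$; this holds for the same reason as for $e_i$.
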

\begin{lemma}\label{lem-2}
Let $X,Y\in \Gamma(\phi^{-1}TN)$ and $p$ be a positive integer. Then for any compact domain $\Omega$ of $M$, we have
\begin{align}
  \int_{\Omega} \bigl\langle \overline{\nabla}X, \overline{\nabla}Y \bigr\rangle \, dv_g
  &= \int_{\Omega} \bigl\langle \overline{\Delta}X, Y \bigr\rangle \, dv_g
   = \int_{\Omega} \bigl\langle X, \overline{\Delta}Y \bigr\rangle \, dv_g, \label{L-2-1} \\[4pt]
  \int_{\Omega} \bigl\langle \overline{\Delta}^{\,p} X, Y \bigr\rangle \, dv_g
  &= \int_{\Omega} \bigl\langle X, \overline{\Delta}^{\,p} Y \bigr\rangle \, dv_g, \label{L-2-2} \\[4pt]
  \int_{\Omega} \bigl\langle \overline{\nabla}X, f\overline{\nabla}Y \bigr\rangle \, dv_g
  & = \int_{\Omega} \bigl\langle X, f\overline{\Delta}Y-\overline{\nabla}_{\nabla f}Y \bigr\rangle \, dv_g, \label{L-2-3}\\[4pt]
  \int_{\Omega} \bigl\langle \overline{\nabla}X, fd \phi \bigr\rangle \, dv_g
  & = -\int_{\Omega} \bigl\langle X, f\tau(\phi)+d\phi(\nabla f) \bigr\rangle \, dv_g. \label{L-2-4}
\end{align}
\end{lemma}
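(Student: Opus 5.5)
The plan is to reduce all four identities to the divergence theorem on $\Omega$, applied to suitable vector fields on $M$. As in the variational setting, we use that the sections (or at least one of them, e.g.\ the variation field $V$) have compact support in the interior of $\Omega$, so that boundary terms vanish. Throughout we work at a point with a local orthonormal frame $\{e_i\}$ that is geodesic at that point, i.e.\ $\nabla_{e_i}e_j=0$ there. Expressions that are tensorial can then be computed in this frame.

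For \eqref{L-2-1}, consider the vector field $Z$ on $M$ defined by $\langle Z,W\rangle=\langle \overline{\nabla}_W X, Y\rangle$. Using metric compatibility of $\overline{\nabla}$ one computes
\[
\operatorname{div} Z=\sum_i\bigl(\langle \overline{\nabla}_{e_i}\overline{\nabla}_{e_i}X-\overline{\nabla}_{\nabla_{e_i}e_i}X,Y\rangle+\langle\overline{\nabla}_{e_i}X,\overline{\nabla}_{e_i}Y\rangle\bigr)=-\langle\overline{\Delta}X,Y\rangle+\langle\overline{\nabla}X,\overline{\nabla}Y\rangle .
\]
Integrating over $\Omega$ and applying the divergence theorem gives the first equality. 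Swapping the roles of $X$ and $Y$ gives the second.

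For \eqref{L-2-2}, we induct on $p$, applying \eqref{L-2-1} twice at each step: $\int\langle\overline{\Delta}^{p}X,Y\rangle=\int\langle\overline{\Delta}^{p-1}X,\overline{\Delta}Y\rangle$. Repeating this moves every $\overline{\Delta}$ onto $Y$. This is valid because compact support is preserved under differentiation, so $\overline{\Delta}^{j}X$ and $\overline{\Delta}^{j}Y$ remain admissible.

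For \eqref{L-2-3}, take the vector field $Z$ with $\langle Z,W\rangle=f\langle X,\overline{\nabla}_WY\rangle$. Its divergence is
\[
\operatorname{div}Z=\langle X,\overline{\nabla}_{\nabla f}Y\rangle+f\langle\overline{\nabla}X,\overline{\nabla}Y\rangle-f\langle X,\overline{\Delta}Y\rangle .
\]
Integrating and discarding the boundary term yields
\[
\int_\Omega\langle\overline{\nabla}X,f\overline{\nabla}Y\rangle=\int_\Omega\langle X,f\overline{\Delta}Y-\overline{\nabla}_{\nabla f}Y\rangle .
\]
This is exactly the stated identity.

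For \eqref{L-2-4}, use the vector field $\langle Z,W\rangle=f\langle X,d\phi(W)\rangle$. Since $\sum_i\bigl(\overline{\nabla}_{e_i}d\phi(e_i)-d\phi(\nabla_{e_i}e_i)\bigr)=\tau(\phi)$, its divergence is
\[
\operatorname{div}Z=f\langle\overline{\nabla}X,d\phi\rangle+f\langle X,\tau(\phi)\rangle+\langle X,d\phi(\nabla f)\rangle .
\]
Integrating gives \eqref{L-2-4}. Formally, this is the case $Y=\phi$ of \eqref{L-2-3}, using the paper's conventions $\overline{\nabla}\phi=d\phi$ and $\overline{\Delta}\phi=-\tau(\phi)$.

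The computations are routine. The only real point of care is the boundary terms. The identities are stated "for any compact domain $\Omega$", but they hold only when the integrand's vector field vanishes on $\partial\Omega$. So one must specify that $X$ (the variation field $V$ and its derivatives) is compactly supported in $\Omega$. Since the lemma is used only in that setting, I would state this assumption explicitly at the outset.
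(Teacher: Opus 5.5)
Your proposal is correct and follows the same route as the paper, which proves \eqref{L-2-3} and \eqref{L-2-4} by applying Stokes' theorem to exactly your vector fields $\omega=\langle X,f\overline{\nabla}_{e_j}Y\rangle e_j$ and $\theta=\langle X,f\,d\phi(e_j)\rangle e_j$; for \eqref{L-2-1} and \eqref{L-2-2} the paper simply cites Wang, whereas you derive them with the same divergence argument. Your caveat that the boundary terms vanish only when $X$ is compactly supported in the interior of $\Omega$ (in the application, $X$ is a section built from $V$ and its derivatives) is a hypothesis the paper's statement and proof leave implicit, and it is genuinely needed.
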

\begin{proof}
Proofs of formulas \eqref{L-2-1} and \eqref{L-2-2} are given in \cite{Wang1989}. We now derive formula \eqref{L-2-3} and \eqref{L-2-4}. Consider the vector fields
\[
\omega = \bigl\langle X,\, f\overline{\nabla}_{e_j} Y \bigr\rangle e_j,\quad
\theta = \bigl\langle X,\, fd \phi(e_j) \bigr\rangle e_j.
\]
It is easily to see that
\begin{align*}
  \operatorname{div} \omega
  &= \bigl\langle \overline{\nabla}_{e_j} X,\, f\overline{\nabla}_{e_j} Y \bigr\rangle
     + \bigl\langle X,\, e_j(f) \overline{\nabla}_{e_j} Y \bigr\rangle
     - \bigl\langle X,\, f \overline{\Delta} Y \bigr\rangle \\
  &= \bigl\langle \overline{\nabla} X,\, f\overline{\nabla} Y \bigr\rangle
     + \bigl\langle X,\, \overline{\nabla}_{\nabla f} Y \bigr\rangle
     - \bigl\langle X,\, f \overline{\Delta} Y \bigr\rangle, \\[6pt]
  \operatorname{div} \theta
  &= \bigl\langle \overline{\nabla}_{e_j} X,\, f d\phi(e_j) \bigr\rangle
     + \bigl\langle X,\, e_j(f) d\phi(e_j) \bigr\rangle
     + \bigl\langle X,\, f \big(\overline{\nabla}_{e_j}d\phi\big)(e_j) \bigr\rangle \\
  &= \bigl\langle \overline{\nabla} X,\, f d\phi \bigr\rangle
     + \bigl\langle X,\, d\phi(\nabla f) \bigr\rangle
     + \bigl\langle X,\, f \tau(\phi) \bigr\rangle.
\end{align*}
Applying Stokes' theorem then yields \eqref{L-2-3} and \eqref{L-2-4}.
\end{proof}
\begin{lemma}\label{lem-3} {\rm (c.f. \cite{Maeta-2012Proceedings,Maeta-2012Osaka,Wang1989})}
For any $V_1,V_2\in \Gamma(\phi^{-1}TN)$ and any compact domain $\Omega$ of $M$, we have
\begin{align}\label{L-3}
  &\int_{\Omega}\sum_{j=1}^{m} \bigl\langle R^N (V, d\phi(\nabla_{e_j} e_j)) V_1
           -\overline{\nabla}_{e_j} R^N (V, d\phi(e_j)) V_1,
        V_2 \bigr\rangle \, dv_g   \\
  =& \int_{\Omega}\sum_{j=1}^{m} \bigl\langle R^N (V, d\phi(e_j)) V_1,
        \overline{\nabla}_{e_j} V_2 \bigr\rangle \, dv_g. \nonumber
\end{align}
\end{lemma}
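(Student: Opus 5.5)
The identity in Lemma~\ref{lem-3} is a divergence identity, and I will prove it exactly as in Lemma~\ref{lem-2}. Work at a point with a local orthonormal frame $\{e_j\}$, and define the vector field
\[
\eta=\sum_{j=1}^m \bigl\langle R^N(V,d\phi(e_j))V_1,\,V_2\bigr\rangle e_j .
\]
Since $\eta$ is obtained by contracting a tensor with $g$, it does not depend on the choice of frame. The plan is to compute $\operatorname{div}\eta=\sum_j\bigl(e_j\langle\eta,e_j\rangle-\langle \eta,\nabla_{e_j}e_j\rangle\bigr)$ and then integrate over $\Omega$ using Stokes' theorem. The boundary term vanishes because $V$, and with it the whole integrand, is compactly supported in $\Omega$, as in the variational setting of this section.

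The key step is the computation of the divergence. By metric compatibility of $\overline{\nabla}$ on $\phi^{-1}TN$,
\[
e_j\bigl\langle R^N(V,d\phi(e_j))V_1,V_2\bigr\rangle
=\bigl\langle \overline{\nabla}_{e_j}\bigl(R^N(V,d\phi(e_j))V_1\bigr),V_2\bigr\rangle
+\bigl\langle R^N(V,d\phi(e_j))V_1,\overline{\nabla}_{e_j}V_2\bigr\rangle .
\]
The correction term $-\langle\eta,\nabla_{e_j}e_j\rangle$ is expanded by writing $\nabla_{e_j}e_j=\sum_i\langle\nabla_{e_j}e_i... \rangle$ in components. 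Using linearity of $R^N(V,\cdot)V_1$ in its second slot, it becomes
\[
-\sum_j\bigl\langle R^N(V,d\phi(\nabla_{e_j}e_j))V_1,V_2\bigr\rangle .
\]
Here $\overline{\nabla}_{e_j}R^N(V,d\phi(e_j))V_1$ is read, as in Lemma~\ref{lem-1}, as the covariant derivative of the whole section $R^N(V,d\phi(e_j))V_1$. Putting the pieces together gives
\[
\operatorname{div}\eta=\sum_j\Bigl\langle \overline{\nabla}_{e_j}R^N(V,d\phi(e_j))V_1-R^N(V,d\phi(\nabla_{e_j}e_j))V_1,\;V_2\Bigr\rangle+\sum_j\bigl\langle R^N(V,d\phi(e_j))V_1,\overline{\nabla}_{e_j}V_2\bigr\rangle .
\]
Integrating and using $\int_\Omega\operatorname{div}\eta\,dv_g=0$ then yields \eqref{L-3} directly, with the signs as stated.

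The only point that needs care is the bookkeeping in the term $\langle\eta,\nabla_{e_j}e_j\rangle$. To make it transparent, I will either choose a normal frame at the point, so that $\nabla_{e_j}e_j=0$ there, or verify frame-independence directly. In a normal frame, the expression $\overline{\nabla}_{e_j}(R^N(V,d\phi(e_j))V_1)-R^N(V,d\phi(\nabla_{e_j}e_j))V_1$ is the trace of a tensorial derivative, so the identity holds at every point. No further obstacle is expected; the argument is a direct analogue of the derivation of \eqref{L-2-3}.
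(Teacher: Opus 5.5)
Your divergence-theorem argument with $\eta=\sum_j\langle R^N(V,d\phi(e_j))V_1,V_2\rangle e_j$ is correct and is the standard proof. The paper gives no proof of its own and defers to Maeta and Wang, who argue this way, and it is the same technique the paper uses for Lemma~\ref{lem-2}. Your reading of $\overline{\nabla}_{e_j}R^N(V,d\phi(e_j))V_1$ as the derivative of the whole section (consistent with Lemma~\ref{lem-1}) and your use of the support of $V$ to kill the boundary term are both right; the only blemish is the garbled expansion, which should read $\nabla_{e_j}e_j=\sum_i\langle\nabla_{e_j}e_j,e_i\rangle e_i$, giving $\langle\eta,\nabla_{e_j}e_j\rangle=\langle R^N(V,d\phi(\nabla_{e_j}e_j))V_1,V_2\rangle$ by linearity.
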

\begin{theorem}\label{variation-F}
Let $\phi: (M^m,g) \rightarrow (N^n,h)$ be a smooth map and $k\geq 1$ be an integer.
Then
\begin{equation}\label{First-V}
  \left.\frac{d}{dt}\right|_{t=0} E_{f,k}(\phi_t)
  = -\int_{\Omega} \bigl\langle \tau_{f,k}(\phi),V\bigr\rangle \, dv_g,
\end{equation}
where $\Omega$ is a compact domain of $M$. Furthermore, $\phi$ is an $f$-$k$-harmonic map
if and only if its $f$-$k$-tension field $\tau_{f,k}(\phi)$ vanishes identically.

For $k = 2s$ with $s \geq 1$, we have
\begin{small}
\begin{align}
  \tau_{f,2s}(\phi)
  ={}& \overline{\Delta}^{\,s}\bigl(f \overline{\Delta}^{\,s-1}\tau(\phi)\bigr)
      -\sum_{j=1}^{m} R^N\Bigl( \overline{\Delta}^{\,s-1}\bigl(f \overline{\Delta}^{\,s-1}\tau(\phi)\bigr),
                               d\phi(e_j)\Bigr)d\phi(e_j) \label{f-2s}\\
  &   -\sum_{l=1}^{s-1}\sum_{j=1}^{m}
      \Bigl\{ R^N\Bigl( \overline{\nabla}_{e_j}\, \overline{\Delta}^{\,l-1}\bigl( f \overline{\Delta}^{\,s-1}\tau(\phi)\bigr),
                       \overline{\Delta}^{\,s-1-l}\tau(\phi)\Bigr)d\phi(e_j) \nonumber\\
  &\qquad - R^N\Bigl( \overline{\Delta}^{\,l-1}\bigl( f \overline{\Delta}^{\,s-1}\tau(\phi)\bigr),
                     \overline{\nabla}_{e_j}\, \overline{\Delta}^{\,s-1-l}\tau(\phi)\Bigr)d\phi(e_j)
      \Bigr\}. \nonumber
\end{align}
\end{small}
For $k = 2s+1$ with $s \geq 0$, we have
\begin{small}
\begin{align}
  &\tau_{f,2s+1}(\phi)
  ={} \overline{\Delta}^{\,s}\bigl[ f \overline{\Delta}^{\,s}\tau(\phi)
                     - \overline{\nabla}_{\nabla f} \,\overline{\Delta}^{\,s-1}\tau(\phi) \bigr] \label{f-2s+1}\\
  &   -\sum_{j=1}^{m}\Bigl\{
        fR^N\Bigl(\overline{\nabla}_{e_j}\,\overline{\Delta}^{\,s-1}\tau(\phi),
                  \overline{\Delta}^{\,s-1}\tau(\phi)\Bigr) d\phi(e_j) \nonumber\\
  &\qquad + R^N\Bigl( \overline{\Delta}^{\,s-1}\bigl[ f \overline{\Delta}^{\,s}\tau(\phi)
                     - \overline{\nabla}_{\nabla f} \,\overline{\Delta}^{\,s-1}\tau(\phi) \bigr],
                     d\phi(e_j)\Bigr)d\phi(e_j)
      \Bigr\} \nonumber\\
  &   -\sum_{l=1}^{s-1}\sum_{j=1}^{m}\Bigl\{
        R^N\Bigl( \overline{\nabla}_{e_j}\, \overline{\Delta}^{\,l-1}\bigl[ f \overline{\Delta}^{\,s}\tau(\phi)
                     - \overline{\nabla}_{\nabla f} \,\overline{\Delta}^{\,s-1}\tau(\phi) \bigr],
                  \overline{\Delta}^{\,s-1-l}\tau(\phi)\Bigr)d\phi(e_j) \nonumber\\
  &\qquad - R^N\Bigl( \overline{\Delta}^{\,l-1}\bigl[ f \overline{\Delta}^{\,s}\tau(\phi)
                     - \overline{\nabla}_{\nabla f} \,\overline{\Delta}^{\,s-1}\tau(\phi) \bigr],
                     \overline{\nabla}_{e_j}\, \overline{\Delta}^{\,s-1-l}\tau(\phi)\Bigr)d\phi(e_j)
      \Bigr\}. \nonumber
\end{align}
\end{small}
\end{theorem}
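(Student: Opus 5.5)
The plan is to differentiate $E_{f,k}(\phi_t)$ under the integral sign. We then substitute the variation formulas of Lemma~\ref{lem-1} and move every covariant derivative off $V$ using the integration-by-parts identities of Lemmas~\ref{lem-2} and~\ref{lem-3}. Since $V$ has compact support in $\Omega$, all boundary terms from Stokes' theorem vanish, and $f$ does not depend on $t$. Once the first variation has the form $-\int_\Omega\langle \tau_{f,k}(\phi),V\rangle\,dv_g$ for arbitrary compactly supported $V$, the equivalence ``$\phi$ is $f$-$k$-harmonic iff $\tau_{f,k}(\phi)=0$'' follows from the fundamental lemma of the calculus of variations. We treat the even and odd cases separately.

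\emph{Even case $k=2s$.} Set $W:=f\,\overline{\Delta}^{s-1}\tau(\phi)$. Metric compatibility gives
\[
\frac{d}{dt}\Big|_{t=0}E_{f,2s}(\phi_t)=\int_\Omega\Bigl\langle \overline{\nabla}_{\partial_t}\overline{\Delta}^{s-1}\tau(\phi_t)\big|_{t=0},\,W\Bigr\rangle\,dv_g .
\]
Into this we insert \eqref{L-1-1} and handle the pieces as follows.
\begin{itemize}
\item The term $-\langle\overline{\Delta}^sV,W\rangle$ becomes $-\langle V,\overline{\Delta}^sW\rangle$ by \eqref{L-2-2}.
\item For the term $\langle\overline{\Delta}^{s-1}R^N(V,d\phi(e_j))d\phi(e_j),W\rangle$, we first move $\overline{\Delta}^{s-1}$ onto $W$ by \eqref{L-2-2}. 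Curvature symmetries, $\langle R(V,X)Y,Z\rangle=\langle R(Y,Z)V,X\rangle$ and its variants, then turn it into $\langle R^N(\overline{\Delta}^{s-1}W,d\phi(e_j))d\phi(e_j),V\rangle$ up to sign.
\item For each $l$, we move $\overline{\Delta}^{l-1}$ onto $W$ via \eqref{L-2-2}. The first two curvature terms are then converted by Lemma~\ref{lem-3} (with $V_1=\overline{\Delta}^{s-1-l}\tau(\phi)$, $V_2=\overline{\Delta}^{l-1}W$) into $\langle R^N(V,d\phi(e_j))\overline{\Delta}^{s-1-l}\tau,\overline{\nabla}_{e_j}\overline{\Delta}^{l-1}W\rangle$. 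The pair symmetry of $R^N$ rewrites each of the two resulting terms as $\langle R^N(\cdot,\cdot)d\phi(e_j),V\rangle$.
\end{itemize}
Collecting signs yields \eqref{f-2s}.

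\emph{Odd case $k=2s+1$.} Set $U:=\overline{\Delta}^{s-1}\tau(\phi)$. The derivative is $\int_\Omega f\langle\overline{\nabla}_{\partial_t}\overline{\nabla}_{e_i}U,\overline{\nabla}_{e_i}U\rangle\,dv_g$, into which we substitute \eqref{L-1-2}. Every term of \eqref{L-1-2} except the last has the form $\overline{\nabla}_{e_i}Z$. For these we apply \eqref{L-2-3} with $Y=U$ to get $\int\langle Z,f\overline{\Delta}U-\overline{\nabla}_{\nabla f}U\rangle$, where $f\overline{\Delta}U-\overline{\nabla}_{\nabla f}U=f\overline{\Delta}^s\tau-\overline{\nabla}_{\nabla f}\overline{\Delta}^{s-1}\tau=:\widetilde W$. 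The problem is then formally the even case with $W$ replaced by $\widetilde W$, and it is handled by the same three steps. The extra last term $fR^N(V,d\phi(e_i))U$ paired with $\overline{\nabla}_{e_i}U$ is rewritten by curvature symmetry as $-\langle fR^N(\overline{\nabla}_{e_j}U,U)d\phi(e_j),V\rangle$, which produces the additional $f$-term in \eqref{f-2s+1}.

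For $s=0$, i.e.\ $k=1$, we use instead \eqref{L-2-4} and the conventions $\overline{\Delta}^{-1}\tau=-\phi$, $\overline{\nabla}\phi=d\phi$. This recovers $\tau_{f,1}=-(f\tau+d\phi(\nabla f))$ up to the sign convention, as a consistency check.

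The main obstacle is the bookkeeping in the middle step. We must track signs under the curvature symmetries and check that, after Lemma~\ref{lem-3} and the exchange of arguments, the two remaining curvature terms come out exactly in the form $R^N(\overline{\nabla}_{e_j}\overline{\Delta}^{l-1}W,\overline{\Delta}^{s-1-l}\tau)d\phi(e_j)$ and $R^N(\overline{\Delta}^{l-1}W,\overline{\nabla}_{e_j}\overline{\Delta}^{s-1-l}\tau)d\phi(e_j)$ with opposite signs. We would verify this first for $s=2$ against Lu's $f$-biharmonic formula \eqref{f-biharmonic-equation} and the constant-$f$ formulas of Wang and Maeta.
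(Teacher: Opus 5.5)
Your proposal follows the paper's proof step for step, including the observation that \eqref{L-2-3} turns the odd case into the even case with $W$ replaced by $\widetilde W$. There is, however, one step that is false as written: the sign of the extra odd-case term.

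Every curvature term that arises has the form $\langle R^N(V,Y)Z,W\rangle$. The identity $\langle R^N(V,Y)Z,W\rangle=\langle R^N(W,Z)Y,V\rangle$, which is the paper's symmetry, converts it \emph{without} a sign change. In particular,
\[
\sum_{j}\bigl\langle R^N(V,d\phi(e_j))U,\,f\,\overline{\nabla}_{e_j}U\bigr\rangle
=+\sum_{j}\bigl\langle f\,R^N(\overline{\nabla}_{e_j}U,U)\,d\phi(e_j),\,V\bigr\rangle ,
\]
not $-\langle\cdots\rangle$ as you wrote. The first variation is normalized as $-\int_\Omega\langle\tau_{f,k}(\phi),V\rangle\,dv_g$. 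It is precisely this plus sign that produces $-fR^N(\overline{\nabla}_{e_j}\overline{\Delta}^{s-1}\tau(\phi),\overline{\Delta}^{s-1}\tau(\phi))d\phi(e_j)$ in \eqref{f-2s+1}. With your sign the term comes out reversed. For $k=3$ and constant $f$ you would then get a $3$-tension field that disagrees with \eqref{3-harmonic-2}.

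The same identity removes your \emph{up to sign} hedges. The term $\int_\Omega\langle\overline{\Delta}^{s-1}R^N(V,d\phi(e_j))d\phi(e_j),W\rangle$ equals $+\int_\Omega\langle R^N(\overline{\Delta}^{s-1}W,d\phi(e_j))d\phi(e_j),V\rangle$. For $k=1$, the normalization \eqref{First-V} together with \eqref{L-2-4} fixes $\tau_{f,1}(\phi)=f\tau(\phi)+d\phi(\nabla f)$, not its negative. There is no residual sign convention to appeal to.

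Finally, Lu's formula is the case $k=2$ ($s=1$), where the $l$-sum is empty. It therefore cannot test the $l$-term bookkeeping you identify as the main obstacle. That check needs $s\ge 2$, with constant $f$, against Wang and Maeta.
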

\begin{proof}
By using \eqref{E-2s}, we obtain
\begin{align}
  \left.\frac{d}{dt}\right|_{t=0} E_{f,2s}(\phi_t)
  = & \left.\int_{\Omega} f\Bigl\langle
      \overline{\nabla}_{\frac{\partial}{\partial t}} \overline{\Delta}^{\,s-1}\tau(\phi_t),
      \overline{\Delta}^{\,s-1}\tau(\phi_t)
    \Bigr\rangle \, dv_g \right|_{t=0}\label{P-1}\\
    = & \int_{\Omega} \Bigl\langle
      \left.\overline{\nabla}_{\frac{\partial}{\partial t}} \overline{\Delta}^{\,s-1}\tau(\phi_t) \right|_{t=0},
      f\overline{\Delta}^{\,s-1}\tau(\phi)
    \Bigr\rangle \, dv_g.\nonumber
\end{align}
Applying \eqref{L-1-1} and \eqref{L-2-2} to \eqref{P-1} gives
\begin{align}\label{P-2}
  & \left.\frac{d}{dt}\right|_{t=0} E_{f,2s}(\phi_t)\\
  ={} & \int_{\Omega} \Bigl\langle
      -\overline{\Delta}^{\,s}V
     + \sum_{j=1}^{m}\overline{\Delta}^{\,s-1}R^N\big( V , d\phi(e_j) \big)d\phi(e_j)  \nonumber\\
  &   + \sum_{l=1}^{s-1}\sum_{j=1}^{m} \overline{\Delta}^{\,l-1}\Big\{
        R^N\big(V, d\phi(\nabla_{e_j}e_j)\big)\overline{\Delta}^{\,s-1-l}\tau(\phi) \nonumber \\
  &\quad   - \overline{\nabla}_{e_j}R^N\big( V , d\phi(e_j) \big)\overline{\Delta}^{\,s-1-l}\tau(\phi)  \nonumber \\
  &\quad   - R^N\big( V , d\phi(e_j) \big)\overline{\nabla}_{e_j}\, \overline{\Delta}^{\,s-1-l}\tau(\phi)
     \Big\}  \, , \,   f\overline{\Delta}^{\,s-1}\tau(\phi)    \Bigr\rangle \, dv_g\nonumber\\
  ={} & \int_{\Omega} \Bigl\langle
      -V  \, , \,  \overline{\Delta}^{\,s}\big( f\overline{\Delta}^{\,s-1}\tau(\phi) \big)   \Bigr\rangle \, dv_g\nonumber\\
     & +\int_{\Omega} \sum_{j=1}^{m} \Bigl\langle
       R^N\big( V , d\phi(e_j) \big)d\phi(e_j)   \, , \,   \overline{\Delta}^{\,s-1}\big( f\overline{\Delta}^{\,s-1}\tau(\phi) \big)    \Bigr\rangle \, dv_g\nonumber\\
      &   + \int_{\Omega}\sum_{l=1}^{s-1}\sum_{j=1}^{m}  \Bigl\langle \Big\{
        R^N\big(V, d\phi(\nabla_{e_j}e_j)\big)\overline{\Delta}^{\,s-1-l}\tau(\phi) \nonumber \\
  &\quad   - \overline{\nabla}_{e_j}R^N\big( V , d\phi(e_j) \big)\overline{\Delta}^{\,s-1-l}\tau(\phi)  \nonumber \\
  &\quad   - R^N\big( V , d\phi(e_j) \big)\overline{\nabla}_{e_j}\, \overline{\Delta}^{\,s-1-l}\tau(\phi)
     \Big\}  \, , \,   \overline{\Delta}^{\,l-1}\big( f\overline{\Delta}^{\,s-1}\tau(\phi) \big)    \Bigr\rangle \, dv_g.\nonumber
\end{align}
With \eqref{L-3} and the curvature symmetry $\langle R^N(X,Y)Z,W\rangle = \langle R^N(W,Z)Y,X\rangle$, we find that \eqref{P-2} becomes
\begin{align*}
  & \left.\frac{d}{dt}\right|_{t=0} E_{f,2s}(\phi_t)\\
  ={} & \int_{\Omega} \Bigl\langle
      -V  \, , \,  \overline{\Delta}^{\,s}\big( f\overline{\Delta}^{\,s-1}\tau(\phi) \big)   \Bigr\rangle \, dv_g\\
     & +\int_{\Omega} \sum_{j=1}^{m}\Bigl\langle
       R^N\big( V , d\phi(e_j) \big)d\phi(e_j)   \, , \,   \overline{\Delta}^{\,s-1}\big( f\overline{\Delta}^{\,s-1}\tau(\phi) \big)    \Bigr\rangle \, dv_g\\
      &   +\int_{\Omega} \sum_{l=1}^{s-1}\sum_{j=1}^{m}\Bigl\langle
          R^N\big( V , d\phi(e_j) \big)\overline{\Delta}^{\,s-1-l}\tau(\phi)
       \, , \,  \overline{\nabla}_{e_j}\, \overline{\Delta}^{\,l-1}\big( f\overline{\Delta}^{\,s-1}\tau(\phi) \big)    \Bigr\rangle \, dv_g\\
      &   -\int_{\Omega} \sum_{l=1}^{s-1}\sum_{j=1}^{m}\Bigl\langle
          R^N\big( V , d\phi(e_j) \big)\overline{\nabla}_{e_j}\, \overline{\Delta}^{\,s-1-l}\tau(\phi)
       \, , \,   \overline{\Delta}^{\,l-1}\big( f\overline{\Delta}^{\,s-1}\tau(\phi) \big)    \Bigr\rangle \, dv_g\\
  ={} & \int_{\Omega} \Bigl\langle
      -\overline{\Delta}^{\,s}\big( f\overline{\Delta}^{\,s-1}\tau(\phi) \big)   \, , \,  V  \Bigr\rangle \, dv_g\\
     & +\int_{\Omega} \sum_{j=1}^{m}\Bigl\langle
       R^N\Big( \overline{\Delta}^{\,s-1}\big( f\overline{\Delta}^{\,s-1}\tau(\phi) \big)  , d\phi(e_j) \Big)d\phi(e_j)   \, , \,   V   \Bigr\rangle \, dv_g\\
      &   +\int_{\Omega} \sum_{l=1}^{s-1}\sum_{j=1}^{m}\Bigl\langle
          R^N\Big( \overline{\nabla}_{e_j}\, \overline{\Delta}^{\,l-1}\big( f\overline{\Delta}^{\,s-1}\tau(\phi) \big)  , \overline{\Delta}^{\,s-1-l}\tau(\phi) \Big)d\phi(e_j)
       \, , \,  V   \Bigr\rangle \, dv_g\\
      &   -\int_{\Omega} \sum_{l=1}^{s-1}\sum_{j=1}^{m}\Bigl\langle
          R^N\Big( \overline{\Delta}^{\,l-1}\big( f\overline{\Delta}^{\,s-1}\tau(\phi) \big) ,  \overline{\nabla}_{e_j}\, \overline{\Delta}^{\,s-1-l}\tau(\phi) \Big)d\phi(e_j)
       \, , \,   V    \Bigr\rangle \, dv_g\\
  ={} & -\int_{\Omega} \bigl\langle \tau_{f,2s}(\phi),V\bigr\rangle \, dv_g.
\end{align*}
Similarly, from equations \eqref{E-2s+1}, \eqref{L-1-2} and the identities \eqref{L-2-2}, \eqref{L-2-3}, we get
\begin{align}
  & \left.\frac{d}{dt}\right|_{t=0} E_{f,2s+1}(\phi_t) \label{P-3}\\
      ={} & \int_{\Omega} \sum_{i=1}^m \Bigl\langle
    \left.\overline{\nabla}_{\frac{\partial}{\partial t}} \overline{\nabla}_{e_i} \overline{\Delta}^{\,s-1} \tau(\phi_t) \right|_{t=0},
      f\overline{\nabla}_{e_i}\overline{\Delta}^{\,s-1}\tau(\phi)
    \Bigr\rangle \, dv_g \nonumber\\
    ={}&\int_{\Omega}  \Bigl\langle
    - \overline{\Delta}^{\,s} V
        \,  ,  \,
     f \overline{\Delta}^{\,s}\tau(\phi)
                     - \overline{\nabla}_{\nabla f} \,\overline{\Delta}^{\,s-1}\tau(\phi)
    \Bigr\rangle \, dv_g \nonumber\\
    & + \int_{\Omega} \sum_{j=1}^m \Bigl\langle
     \overline{\Delta}^{\,s-1} R^N (V, d\phi(e_j)) d\phi(e_j)
        \,  ,  \,
      f \overline{\Delta}^{\,s}\tau(\phi)
                     - \overline{\nabla}_{\nabla f} \,\overline{\Delta}^{\,s-1}\tau(\phi)
    \Bigr\rangle \, dv_g \nonumber\\
    &+ \int_{\Omega} \sum_{l=1}^{s-1}\sum_{j=1}^m \Bigl\langle
     \overline{\Delta}^{\,l-1} \Big\{
         R^N (V, d\phi(\nabla_{e_j} e_j)) \overline{\Delta}^{\,s-1-l} \tau(\phi)\nonumber\\
  &\quad -\overline{\nabla}_{e_j} R^N (V, d\phi(e_j)) \overline{\Delta}^{\,s-1-l} \tau(\phi) \nonumber\\
  &\quad - R^N (V, d\phi(e_j)) \overline{\nabla}_{e_j} \overline{\Delta}^{\,s-1-l} \tau(\phi)
        \Big\}
        \,  ,  \,
     f \overline{\Delta}^{\,s}\tau(\phi)
                     - \overline{\nabla}_{\nabla f} \,\overline{\Delta}^{\,s-1}\tau(\phi)
    \Bigr\rangle \, dv_g \nonumber\\
    &+ \int_{\Omega} \sum_{i=1}^m \Bigl\langle
     R^N (V, d\phi(e_i)) \overline{\Delta}^{\,s-1} \tau(\phi)
        \,  ,  \,
      f\overline{\nabla}_{e_i}\overline{\Delta}^{\,s-1}\tau(\phi)
    \Bigr\rangle \, dv_g \nonumber\\
       ={}&\int_{\Omega}  \Bigl\langle
    -  V
        \,  ,  \,
     \overline{\Delta}^{\,s}\bigl[ f \overline{\Delta}^{\,s}\tau(\phi)
                     - \overline{\nabla}_{\nabla f} \,\overline{\Delta}^{\,s-1}\tau(\phi) \bigr]
    \Bigr\rangle \, dv_g \nonumber\\
    & + \int_{\Omega} \sum_{j=1}^m \Bigl\langle
      R^N (V, d\phi(e_j)) d\phi(e_j)
        \,  ,  \,
      \overline{\Delta}^{\,s-1}\bigl[ f \overline{\Delta}^{\,s}\tau(\phi)
                     - \overline{\nabla}_{\nabla f} \,\overline{\Delta}^{\,s-1}\tau(\phi) \bigr]
    \Bigr\rangle \, dv_g \nonumber\\
    &+ \int_{\Omega} \sum_{l=1}^{s-1}\sum_{j=1}^m \Bigl\langle
     \Big\{
         R^N (V, d\phi(\nabla_{e_j} e_j)) \overline{\Delta}^{\,s-1-l} \tau(\phi)\nonumber\\
  &\quad -\overline{\nabla}_{e_j} R^N (V, d\phi(e_j)) \overline{\Delta}^{\,s-1-l} \tau(\phi) \nonumber\\
  &\quad - R^N (V, d\phi(e_j)) \overline{\nabla}_{e_j} \overline{\Delta}^{\,s-1-l} \tau(\phi)
        \Big\}\nonumber\\
    &\quad ,  \,
     \overline{\Delta}^{\,l-1} \bigl[ f \overline{\Delta}^{\,s}\tau(\phi)
                     - \overline{\nabla}_{\nabla f} \,\overline{\Delta}^{\,s-1}\tau(\phi) \bigr]
    \Bigr\rangle \, dv_g \nonumber\\
    &+ \int_{\Omega} \sum_{j=1}^m \Bigl\langle
     R^N (V, d\phi(e_j)) \overline{\Delta}^{\,s-1} \tau(\phi)
        \,  ,  \,
      f\overline{\nabla}_{e_j}\overline{\Delta}^{\,s-1}\tau(\phi)
    \Bigr\rangle \, dv_g. \nonumber
\end{align}
Applying \eqref{L-3} and the symmetry $\langle R^N(X,Y)Z,W\rangle = \langle R^N(W,Z)Y,X\rangle$ of $R^N$ to \eqref{P-3} yields
\begin{small}
\begin{align*}
  & \left.\frac{d}{dt}\right|_{t=0} E_{f,2s+1}(\phi_t)  \\
  ={}&\int_{\Omega}  \Bigl\langle
    -  V   \,  ,  \,
     \overline{\Delta}^{\,s}\bigl[ f \overline{\Delta}^{\,s}\tau(\phi)
                     - \overline{\nabla}_{\nabla f} \,\overline{\Delta}^{\,s-1}\tau(\phi) \bigr]
    \Bigr\rangle \, dv_g \\
    & + \int_{\Omega} \sum_{j=1}^m \Bigl\langle
      R^N (V, d\phi(e_j)) d\phi(e_j)      \,  ,  \,
      \overline{\Delta}^{\,s-1}\bigl[ f \overline{\Delta}^{\,s}\tau(\phi)
                     - \overline{\nabla}_{\nabla f} \,\overline{\Delta}^{\,s-1}\tau(\phi) \bigr]
    \Bigr\rangle \, dv_g \\
    &+ \int_{\Omega} \sum_{l=1}^{s-1}\sum_{j=1}^m \Bigl\langle
      R^N (V, d\phi(e_j)) \overline{\Delta}^{\,s-1-l} \tau(\phi) \, ,  \,
     \overline{\nabla}_{e_j}\overline{\Delta}^{\,l-1} \bigl[ f \overline{\Delta}^{\,s}\tau(\phi)
                     - \overline{\nabla}_{\nabla f} \,\overline{\Delta}^{\,s-1}\tau(\phi) \bigr]
    \Bigr\rangle \, dv_g \\
            &- \int_{\Omega} \sum_{l=1}^{s-1}\sum_{j=1}^m \Bigl\langle
      R^N (V, d\phi(e_j)) \overline{\nabla}_{e_j} \overline{\Delta}^{\,s-1-l} \tau(\phi)\, ,  \,
     \overline{\Delta}^{\,l-1} \bigl[ f \overline{\Delta}^{\,s}\tau(\phi)
                     - \overline{\nabla}_{\nabla f} \,\overline{\Delta}^{\,s-1}\tau(\phi) \bigr]
    \Bigr\rangle \, dv_g \\
        &+ \int_{\Omega} \sum_{j=1}^m \Bigl\langle
     R^N (V, d\phi(e_j)) \overline{\Delta}^{\,s-1} \tau(\phi)
        \,  ,  \,
      f\overline{\nabla}_{e_j}\overline{\Delta}^{\,s-1}\tau(\phi)
    \Bigr\rangle \, dv_g \\
  ={}&\int_{\Omega}  \Bigl\langle
     -\overline{\Delta}^{\,s}\bigl[ f \overline{\Delta}^{\,s}\tau(\phi)
                     - \overline{\nabla}_{\nabla f} \,\overline{\Delta}^{\,s-1}\tau(\phi) \bigr]
     \,  ,  \,    V   \Bigr\rangle \, dv_g \\
    & + \int_{\Omega} \sum_{j=1}^m \Bigl\langle
      R^N \Big( \overline{\Delta}^{\,s-1}\bigl[ f \overline{\Delta}^{\,s}\tau(\phi)
                     - \overline{\nabla}_{\nabla f} \,\overline{\Delta}^{\,s-1}\tau(\phi) \bigr], d\phi(e_j)\Big) d\phi(e_j)
        \,  ,  \,   V   \Bigr\rangle \, dv_g \\
    &+ \int_{\Omega} \sum_{l=1}^{s-1}\sum_{j=1}^m \Bigl\langle
      R^N \Big(\overline{\nabla}_{e_j}\overline{\Delta}^{\,l-1} \bigl[ f \overline{\Delta}^{\,s}\tau(\phi)
                     - \overline{\nabla}_{\nabla f} \,\overline{\Delta}^{\,s-1}\tau(\phi) \bigr], \overline{\Delta}^{\,s-1-l} \tau(\phi)  \Big) d\phi(e_j) \, ,  \,     V   \Bigr\rangle \, dv_g \\
            &- \int_{\Omega} \sum_{l=1}^{s-1}\sum_{j=1}^m \Bigl\langle
      R^N \Big(\overline{\Delta}^{\,l-1} \bigl[ f \overline{\Delta}^{\,s}\tau(\phi)
                     - \overline{\nabla}_{\nabla f} \,\overline{\Delta}^{\,s-1}\tau(\phi) \bigr], \overline{\nabla}_{e_j} \overline{\Delta}^{\,s-1-l} \tau(\phi) \Big) d\phi(e_j)   \, ,  \,    V  \Bigr\rangle \, dv_g \\
        &+ \int_{\Omega} \sum_{j=1}^m \Bigl\langle
     R^N \Big( f\overline{\nabla}_{e_j}\overline{\Delta}^{\,s-1}\tau(\phi), \overline{\Delta}^{\,s-1} \tau(\phi)\Big) d\phi(e_j)
        \,  ,  \,     V   \Bigr\rangle \, dv_g \\
  ={}& -\int_{\Omega} \bigl\langle \tau_{f,2s+1}(\phi),V\bigr\rangle \, dv_g.
\end{align*}
\end{small}

Recall that $f$-$k$-harmonic maps are critical points of $E_{f,k}(\phi)$, that is, they satisfy
\[
\left.\frac{d}{dt}\right|_{t=0} E_{f,k}(\phi_t) = 0
\]
for all compactly supported variations.
This demonstrates the desired result.
\end{proof}
\begin{remark}
When $k=1$, formulas \eqref{L-1-2} and \eqref{P-3} simplify respectively to
\[
   \overline{\nabla}_{\frac{\partial}{\partial t}} \overline{\nabla}_{e_i}\overline{\Delta}^{\,-1}\tau(\phi_t)\Big|_{t=0}
   = - \overline{\nabla}_{e_i} V ,
\]
and
\begin{align} \label{f-harmonic-1}
   \left.\frac{d}{dt}\right|_{t=0} E_{f,1}(\phi_t)
      &= \int_{\Omega} \sum_{i=1}^m \Bigl\langle
         \left.\overline{\nabla}_{\frac{\partial}{\partial t}} \overline{\nabla}_{e_i} \overline{\Delta}^{\,-1} \tau(\phi_t) \right|_{t=0},
         f\overline{\nabla}_{e_i}\overline{\Delta}^{\,-1}\tau(\phi)
         \Bigr\rangle \, dv_g  \\
      &= \int_{\Omega} \sum_{i=1}^m \Bigl\langle
         \overline{\nabla}_{e_i} V ,  f\,d\phi(e_i) \Bigr\rangle \, dv_g.  \nonumber
\end{align}
Applying \eqref{L-2-4} to \eqref{f-harmonic-1}, we obtain
\begin{equation*}
  \left.\frac{d}{dt}\right|_{t=0} E_{f,1}(\phi_t)
      = -\int_{\Omega} \bigl\langle V,\; f\tau(\phi) + d\phi(\nabla f) \bigr\rangle \, dv_g.
\end{equation*}
Consequently, the Euler-Lagrange equation yields
\begin{equation}\label{f-harmonic-2}
 \tau_{f,1}(\phi) := f\tau(\phi) + d\phi(\nabla f) = 0,
\end{equation}
which is precisely the $f$-harmonic map equation \eqref{f-tension}.
\end{remark}
\begin{remark}
Taking $k=2$, \eqref{f-2s} simplifies to
\begin{align}\label{f-harmonic-3}
  \tau_{f,2}(\phi) = & \ \overline{\Delta}\big( f\tau(\phi) \big)
    - R^N\big(f\tau(\phi), d\phi(e_j)\big)d\phi(e_j)  \\
   = & \ f\overline{\Delta}\tau(\phi) - (\Delta f)\tau(\phi)
        - 2 \overline{\nabla}_{\nabla f}\tau(\phi)
        - f R^N\big(\tau(\phi), d\phi(e_j)\big)d\phi(e_j)  \nonumber\\
   = & \ f\tau_2(\phi) - (\Delta f)\tau(\phi)
        - 2 \overline{\nabla}_{\nabla f}\tau(\phi), \nonumber
\end{align}
where $\tau_2(\phi) := \overline{\Delta}\tau(\phi)
- R^N\big(\tau(\phi), d\phi(e_j)\big)d\phi(e_j)$ is the usual bi-tension field.
This agrees with the $f$-biharmonic equation \eqref{f-biharmonic-equation}.
\end{remark}

\section{$f$-3-harmonic and $f$-4-harmonic curves}

In \cite{Ou2014-f-biharmonic}, Ou characterized $f$-biharmonic curves in general Riemannian manifolds. In this section, we study the existence of proper $f$-3-harmonic and $f$-4-harmonic curves of positive constant geodesic curvature in $N^2(C)$. For convenience, let $p\ge 2$ be an integer and set
\[
(\overline{\nabla}_{\gamma'})^p
:= \underbrace{\overline{\nabla}_{\gamma'} \overline{\nabla}_{\gamma'} \cdots \overline{\nabla}_{\gamma'}}_{p \text{ times}}.
\]

We begin by characterizing $f$-3-harmonic and $f$-4-harmonic curves in a general Riemannian manifold.
\begin{proposition}
Let $\gamma \colon I \to N^n$ be a smooth curve parametrized by arclength, and let $f \colon I \to \mathbb{R}^+$ be a smooth positive function. Then $\gamma$ is an $f$-3-harmonic curve if and only if
\begin{align}\label{3-harmonic-0}
 & f \Bigl\{ \bigl( \overline{\nabla}_{\gamma'} \bigr)^5 \gamma'
            + R^N\Bigl( \bigl( \overline{\nabla}_{\gamma'} \bigr)^3 \gamma',\, \gamma' \Bigr) \gamma'
            - R^N\Bigl( \bigl( \overline{\nabla}_{\gamma'} \bigr)^2 \gamma',\, \overline{\nabla}_{\gamma'} \gamma' \Bigr) \gamma' \Bigr\} \\
        &\quad + 3f' \bigl( \overline{\nabla}_{\gamma'} \bigr)^4 \gamma'
               + 3f'' \bigl( \overline{\nabla}_{\gamma'} \bigr)^3 \gamma'
            + f''' \bigl( \overline{\nabla}_{\gamma'} \bigr)^2 \gamma'  \nonumber\\
        &\quad + f'R^N\Bigl( \bigl( \overline{\nabla}_{\gamma'} \bigr)^2 \gamma',\, \gamma' \Bigr) \gamma'=0. \nonumber
\end{align}
\end{proposition}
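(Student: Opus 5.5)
The plan is to specialize the general Euler--Lagrange formula \eqref{f-2s+1} of Theorem~\ref{variation-F} to the case $k=3$, i.e.\ $s=1$, and to a one-dimensional domain $M=I$ with the arclength parameter $t$. No new variational computation is needed: everything reduces to rewriting the rough Laplacian, the gradient of $f$ and the frame sums along a curve.

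\textbf{Setup along the curve.} I take the global orthonormal frame $e_1=\partial_t$ on $I$, so $m=1$, $\nabla_{e_1}e_1=0$ and $d\gamma(e_1)=\gamma'$. From the definition of the rough Laplacian,
\[
\overline{\Delta}=-\bigl(\overline{\nabla}_{\gamma'}\bigr)^2,\qquad \tau(\gamma)=\overline{\nabla}_{\gamma'}\gamma'.
\]
Since $f$ depends only on $t$, $\nabla f=f'\,\partial_t$, hence $\overline{\nabla}_{\nabla f}=f'\,\overline{\nabla}_{\gamma'}$. For brevity I write $\tau^{(j)}:=(\overline{\nabla}_{\gamma'})^j\tau=(\overline{\nabla}_{\gamma'})^{j+1}\gamma'$.

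\textbf{Substitution into \eqref{f-2s+1} with $s=1$.} The sum over $l$ from $1$ to $s-1$ is empty, and $\overline{\Delta}^{\,s-1}=\overline{\Delta}^{\,0}=\mathrm{id}$. The bracket in \eqref{f-2s+1} becomes
\[
B:=f\overline{\Delta}\tau-\overline{\nabla}_{\nabla f}\tau=-f\tau''-f'\tau'.
\]
Then the formula reads
\[
\tau_{f,3}(\gamma)=\overline{\Delta}B-fR^N(\tau',\tau)\gamma'-R^N(B,\gamma')\gamma'.
\]

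\textbf{Leading term.} Applying $\overline{\Delta}=-(\overline{\nabla}_{\gamma'})^2$ to $B$ and expanding by the Leibniz rule gives
\[
\overline{\Delta}B=(f\tau''+f'\tau')''=f\tau''''+3f'\tau'''+3f''\tau''+f'''\tau'.
\]
In the notation of the statement, these are $f(\overline{\nabla}_{\gamma'})^5\gamma'$, $3f'(\overline{\nabla}_{\gamma'})^4\gamma'$, $3f''(\overline{\nabla}_{\gamma'})^3\gamma'$ and $f'''(\overline{\nabla}_{\gamma'})^2\gamma'$.

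\textbf{Curvature terms.} By linearity of $R^N$ in its first slot,
\[
-R^N(B,\gamma')\gamma'=fR^N(\tau'',\gamma')\gamma'+f'R^N(\tau',\gamma')\gamma'.
\]
The remaining term $-fR^N(\tau',\tau)\gamma'$ is exactly $-fR^N\bigl((\overline{\nabla}_{\gamma'})^2\gamma',\overline{\nabla}_{\gamma'}\gamma'\bigr)\gamma'$. Collecting all terms reproduces \eqref{3-harmonic-0}. Finally, Theorem~\ref{variation-F} gives that $\gamma$ is $f$-3-harmonic if and only if $\tau_{f,3}(\gamma)=0$.

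\textbf{Main obstacle.} The only delicate point is bookkeeping of signs. This involves the minus sign in $\overline{\Delta}=-(\overline{\nabla}_{\gamma'})^2$, the sign of $\overline{\nabla}_{\nabla f}$ inside $B$, and the order of arguments in $R^N(\overline{\nabla}_{e_j}\tau,\tau)$ as written in \eqref{f-2s+1}. I would check these by comparing with the case $f\equiv 1$, where the formula must reduce to the known $3$-harmonic curve equation.
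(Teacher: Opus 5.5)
Your proposal is correct and follows essentially the paper's route: specialize \eqref{f-2s+1} to $s=1$ on a one-dimensional domain, with $\overline{\Delta}=-(\overline{\nabla}_{\gamma'})^2$ and $\overline{\nabla}_{\nabla f}=f'\,\overline{\nabla}_{\gamma'}$, and expand by the Leibniz rule. The one difference is that the paper first rewrites $\tau_{f,3}$ on a general domain as $f\tau_3(\phi)$ plus correction terms in $\Delta f$ and $\nabla f$, a form it reuses for $f$-3-harmonic functions, before restricting to curves; you instead expand $(f\tau''+f'\tau')''$ directly, which is slightly shorter and yields the same coefficients $3f'$, $3f''$, $f'''$ and the same curvature terms.
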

\begin{proof}
Taking $s=1$ in \eqref{f-2s+1}, the $f$-3-tension field becomes
\begin{align} \label{3-harmonic-1}
  \tau_{f,3}(\phi)
  ={}&\overline{\Delta}\big( f \overline{\Delta}\tau(\phi) \big)
    -\overline{\Delta}\, \overline{\nabla}_{\nabla f} \,\tau(\phi) \\
     & -fR^N\Bigl(\overline{\nabla}_{e_j}\,\tau(\phi),
                  \tau(\phi)\Bigr) d\phi(e_j) \nonumber\\
     & -  R^N\Bigl(  f \overline{\Delta}\tau(\phi)-\overline{\nabla}_{\nabla f} \,\tau(\phi) ,
                         d\phi(e_j)\Bigr)d\phi(e_j) \nonumber \\
 ={}& f \tau_3 (\phi)
-(\Delta f)\overline{\Delta}\tau(\phi)
-2 \overline{\nabla}_{\nabla f} \overline{\Delta}\tau(\phi)
       -\overline{\Delta}\, \overline{\nabla}_{\nabla f} \,\tau(\phi) \nonumber\\
          & +  R^N\Bigl(  \overline{\nabla}_{\nabla f} \,\tau(\phi) ,
                         d\phi(e_j)\Bigr)d\phi(e_j), \nonumber
\end{align}
where the 3-tension field is defined as
\begin{align}\label{3-harmonic-2}
  \tau_3(\phi)=& \overline{\Delta}^{\, 2}\tau(\phi)
-R^N\big( \overline{\Delta}\tau(\phi),d\phi(e_j) \big)d\phi(e_j) \\
& \quad   - R^N\big( \overline{\nabla}_{e_j}\tau(\phi),\tau(\phi)\big)d\phi(e_j).  \nonumber
\end{align}
Then, by \eqref{3-harmonic-2} and the fact that $\tau(\gamma) = \overline{\nabla}_{\gamma'} \gamma'$ and $\overline{\Delta}=-\bigl( \overline{\nabla}_{\gamma'} \bigr)^2$, a direct calculation shows that
    \begin{align} \label{3-harmonic-3}
        \tau_3(\gamma)
        &= \bigl( \overline{\nabla}_{\gamma'} \bigr)^4 \tau(\phi)
            - R^N\Bigl( -\bigl( \overline{\nabla}_{\gamma'} \bigr)^2 \tau(\phi),\, \gamma' \Bigr) \gamma'
            - R^N\bigl( \overline{\nabla}_{\gamma'} \tau(\phi),\, \tau(\phi) \bigr) \gamma'  \\
        &= \bigl( \overline{\nabla}_{\gamma'} \bigr)^5 \gamma'
            + R^N\Bigl( \bigl( \overline{\nabla}_{\gamma'} \bigr)^3 \gamma',\, \gamma' \Bigr) \gamma'
            - R^N\Bigl( \bigl( \overline{\nabla}_{\gamma'} \bigr)^2 \gamma',\, \overline{\nabla}_{\gamma'} \gamma' \Bigr) \gamma'. \nonumber
    \end{align}
Applying \eqref{3-harmonic-3} to \eqref{3-harmonic-1} together with $d\gamma(\nabla f) = f'(s) \gamma'(s)$ leads to
    \begin{align*}
        \tau_{f,3}(\gamma)
        &= f \Bigl[ \bigl( \overline{\nabla}_{\gamma'} \bigr)^5 \gamma'
            + R^N\Bigl( \bigl( \overline{\nabla}_{\gamma'} \bigr)^3 \gamma',\, \gamma' \Bigr) \gamma'
            - R^N\Bigl( \bigl( \overline{\nabla}_{\gamma'} \bigr)^2 \gamma',\, \overline{\nabla}_{\gamma'} \gamma' \Bigr) \gamma' \Bigr]  \\
        &\quad + f'' \bigl( \overline{\nabla}_{\gamma'} \bigr)^3 \gamma'
            + 2f' \bigl( \overline{\nabla}_{\gamma'} \bigr)^4 \gamma'
            + \bigl( \overline{\nabla}_{\gamma'} \bigr)^2 \Bigl( f' \bigl( \overline{\nabla}_{\gamma'} \bigr)^2 \gamma' \Bigr) \\
        &\quad + R^N\Bigl( f' \bigl( \overline{\nabla}_{\gamma'} \bigr)^2 \gamma',\, \gamma' \Bigr) \gamma'. \\
        &= f \Bigl[ \bigl( \overline{\nabla}_{\gamma'} \bigr)^5 \gamma'
            + R^N\Bigl( \bigl( \overline{\nabla}_{\gamma'} \bigr)^3 \gamma',\, \gamma' \Bigr) \gamma'
            - R^N\Bigl( \bigl( \overline{\nabla}_{\gamma'} \bigr)^2 \gamma',\, \overline{\nabla}_{\gamma'} \gamma' \Bigr) \gamma' \Bigr] \\
        &\quad + 3f' \bigl( \overline{\nabla}_{\gamma'} \bigr)^4 \gamma'
              + 3f'' \bigl( \overline{\nabla}_{\gamma'} \bigr)^3 \gamma'
            + f''' \bigl( \overline{\nabla}_{\gamma'} \bigr)^2 \gamma'  \\
        &\quad + f'R^N\Bigl( \bigl( \overline{\nabla}_{\gamma'} \bigr)^2 \gamma',\, \gamma' \Bigr) \gamma'.
    \end{align*}
    This complete the proof.
\end{proof}
\begin{proposition}
Let $\gamma \colon I \to N^n$ be a smooth curve parametrized by arclength, and let $f \colon I \to \mathbb{R}^+$ be a smooth positive function. Then $\gamma$ is an $f$-4-harmonic curve if and only if
\begin{align} \label{4-harmonic-0}
 & f\Big\{- \bigl( \overline{\nabla}_{\gamma'} \bigr)^7 \gamma'
        - R^N\Bigl( \bigl( \overline{\nabla}_{\gamma'} \bigr)^5 \gamma', \gamma' \Bigr)\gamma' \\
 &\qquad + R^N\Bigl( \bigl( \overline{\nabla}_{\gamma'} \bigr)^4 \gamma', \overline{\nabla}_{\gamma'} \gamma' \Bigr)\gamma' \nonumber\\
 &\qquad - R^N\Bigl( \bigl( \overline{\nabla}_{\gamma'} \bigr)^3 \gamma', \bigl( \overline{\nabla}_{\gamma'} \bigr)^2 \gamma'\Bigr)\gamma' \Big\} \nonumber\\[4pt]
 & - f^{(4)} \bigl( \overline{\nabla}_{\gamma'} \bigr)^3 \gamma'
    - 4 f''' \bigl( \overline{\nabla}_{\gamma'} \bigr)^4 \gamma'
    - 6 f'' \bigl( \overline{\nabla}_{\gamma'} \bigr)^5 \gamma'
    - 4 f' \bigl( \overline{\nabla}_{\gamma'} \bigr)^6 \gamma' \nonumber\\[4pt]
 & - f'' \, R^N\Bigl( \bigl( \overline{\nabla}_{\gamma'} \bigr)^3 \gamma', \gamma' \Bigr)\gamma'
    - 2 f' \, R^N\Bigl( \bigl( \overline{\nabla}_{\gamma'} \bigr)^4 \gamma', \gamma' \Bigr)\gamma' \nonumber\\
 & + f' \, R^N\Bigl( \bigl( \overline{\nabla}_{\gamma'} \bigr)^3 \gamma', \overline{\nabla}_{\gamma'} \gamma' \Bigr)\gamma' = 0. \nonumber
\end{align}
\end{proposition}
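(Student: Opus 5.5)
The plan is to specialize the Euler--Lagrange operator of Theorem~\ref{variation-F} to a curve, exactly as in the preceding proposition for $k=3$. Take $k=4$, i.e.\ $s=2$, in \eqref{f-2s}. The sum over $l$ then has the single term $l=1$, with $\overline{\Delta}^{\,l-1}=\overline{\Delta}^{\,0}=id$ and $\overline{\Delta}^{\,s-1-l}\tau=\tau$, so
\begin{align*}
\tau_{f,4}(\phi)={}&\overline{\Delta}^{\,2}\bigl(f\overline{\Delta}\tau(\phi)\bigr)-R^N\bigl(\overline{\Delta}(f\overline{\Delta}\tau(\phi)),d\phi(e_j)\bigr)d\phi(e_j)\\
&-R^N\bigl(\overline{\nabla}_{e_j}(f\overline{\Delta}\tau(\phi)),\tau(\phi)\bigr)d\phi(e_j)+R^N\bigl(f\overline{\Delta}\tau(\phi),\overline{\nabla}_{e_j}\tau(\phi)\bigr)d\phi(e_j).
\end{align*}
By Theorem~\ref{variation-F}, $\gamma$ is $f$-4-harmonic if and only if this expression vanishes.

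Next I specialize to an arclength curve. Here $m=1$, $e_1=\gamma'$, $d\gamma(e_1)=\gamma'$, $\nabla_{e_1}e_1=0$, $\overline{\Delta}=-(\overline{\nabla}_{\gamma'})^2$, and $\tau(\gamma)=\overline{\nabla}_{\gamma'}\gamma'$. Write $T_p:=(\overline{\nabla}_{\gamma'})^p\gamma'$ and use primes on $f$ for derivatives in the arclength parameter. Then $\overline{\Delta}\tau=-T_3$ and $f\overline{\Delta}\tau=-fT_3$.

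Applying the Leibniz rule along $\gamma$ gives
\[
\overline{\Delta}(f\overline{\Delta}\tau)=(fT_3)''=f''T_3+2f'T_4+fT_5,
\]
\[
\overline{\Delta}^{\,2}(f\overline{\Delta}\tau)=-(fT_3)''''=-\bigl(f^{(4)}T_3+4f'''T_4+6f''T_5+4f'T_6+fT_7\bigr).
\]
The curvature terms become
\[
-R^N(f''T_3+2f'T_4+fT_5,\gamma')\gamma',
\]
\[
-R^N\bigl(-(f'T_3+fT_4),T_1\bigr)\gamma'=R^N(f'T_3+fT_4,T_1)\gamma',
\]
\[
R^N(-fT_3,T_2)\gamma'=-fR^N(T_3,T_2)\gamma'.
\]
Here I use that $R^N$ is tensorial, so scalar factors pull out linearly.

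Finally, collect the terms. The coefficient of $f$ is
\[
-T_7-R^N(T_5,\gamma')\gamma'+R^N(T_4,T_1)\gamma'-R^N(T_3,T_2)\gamma'.
\]
The remaining terms are
\[
-f^{(4)}T_3-4f'''T_4-6f''T_5-4f'T_6-f''R^N(T_3,\gamma')\gamma'-2f'R^N(T_4,\gamma')\gamma'+f'R^N(T_3,T_1)\gamma'.
\]
These match \eqref{4-harmonic-0} term by term.

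The computation is routine. The main obstacle is keeping track of signs: the minus sign in $\overline{\Delta}=-(\overline{\nabla}_{\gamma'})^2$ enters twice, and the $l=1$ terms carry their own signs in \eqref{f-2s}. I would double-check the result against the case where $f$ is constant, where it must reduce to the known $4$-harmonic curve equation (Wang/Maeta).
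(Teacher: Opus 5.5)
Your proposal is correct and follows the paper's route: you set $s=2$ in \eqref{f-2s}, restrict to an arclength curve using $\overline{\Delta}=-(\overline{\nabla}_{\gamma'})^2$ and $\tau(\gamma)=\overline{\nabla}_{\gamma'}\gamma'$, and expand by the Leibniz rule, and your signs and coefficients all check out. The only difference is the order of steps: the paper first rewrites $\tau_{f,4}(\phi)$ on a general domain as $f\tau_4(\phi)$ plus correction terms in $\Delta f$, $\nabla f$, $\Delta^2 f$, $\nabla\Delta f$ and only then specializes to curves, whereas you specialize immediately and expand $(fT_3)''''$ binomially (in your notation), which is shorter and makes the coefficients $1,4,6,4$ transparent.
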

\begin{proof}
For $k = 4$ (i.e., $s = 2$), the $f$-4-tension field obtained from \eqref{f-2s} reduces to
\begin{align}
  \tau_{f,4}(\phi)
  ={}& \overline{\Delta}^{\,2}\bigl(f \overline{\Delta}\tau(\phi)\bigr)
      - R^N\Bigl( \overline{\Delta}\bigl(f \overline{\Delta}\tau(\phi)\bigr),
                               d\phi(e_j)\Bigr)d\phi(e_j) \label{4-harmonic-1}\\
  &   -  R^N\Bigl( \overline{\nabla}_{e_j}\bigl( f \overline{\Delta}\tau(\phi)\bigr),
                       \tau(\phi)\Bigr)d\phi(e_j) \nonumber\\
  & + R^N\Bigl(  f \overline{\Delta}\tau(\phi),
                     \overline{\nabla}_{e_j}\,\tau(\phi)\Bigr)d\phi(e_j). \nonumber
\end{align}
A straightforward calculation shows
\begin{align} \label{4-harmonic-2}
  \tau_{f,4}(\phi)
  ={ } & f \overline{\Delta}^{\,3} \tau(\phi)
-2(\Delta f)\overline{\Delta}^{\,2} \tau(\phi)
-2\overline{\nabla}_{\nabla f} \overline{\Delta}^{\,2} \tau(\phi) \\
&   + (\Delta^2 f) \overline{\Delta}\tau(\phi)
  + 2 \overline{\nabla}_{ \nabla(\Delta f)}\overline{\Delta}\tau(\phi)
   - 2 \overline{\Delta}\, \overline{\nabla}_{\nabla f}\overline{\Delta}\tau(\phi) \nonumber\\
   &   - f  R^N\Bigl( \overline{\Delta}^{\,2} \tau(\phi) ,
                               d\phi(e_j)\Bigr)d\phi(e_j) \nonumber\\
   &   +(\Delta f)  R^N\Bigl(  \overline{\Delta}\tau(\phi) ,
                               d\phi(e_j)\Bigr)d\phi(e_j) \nonumber\\
   &   +2 R^N\Bigl(   \overline{\nabla}_{\nabla f}\overline{\Delta}\tau(\phi),
                               d\phi(e_j)\Bigr)d\phi(e_j) \nonumber\\
  &   -  R^N\Bigl( e_j(f)\overline{\Delta}\tau(\phi)
  +f\overline{\nabla}_{e_j}\overline{\Delta}\tau(\phi),
                       \tau(\phi)\Bigr)d\phi(e_j)  \nonumber\\
  & + f R^N\Bigl( \overline{\Delta}\tau(\phi),
                     \overline{\nabla}_{e_j}\,\tau(\phi)\Bigr)d\phi(e_j) \nonumber\\
    ={ } & f  \tau_4(\phi)
-2(\Delta f)\overline{\Delta}^{\,2} \tau(\phi)
-2\overline{\nabla}_{\nabla f} \overline{\Delta}^{\,2} \tau(\phi) \nonumber\\
&   + (\Delta^2 f) \overline{\Delta}\tau(\phi)
  + 2 \overline{\nabla}_{ \nabla(\Delta f)}\overline{\Delta}\tau(\phi)
   - 2 \overline{\Delta}\, \overline{\nabla}_{\nabla f}\overline{\Delta}\tau(\phi) \nonumber\\
   &   +(\Delta f)  R^N\Bigl(  \overline{\Delta}\tau(\phi) ,
                               d\phi(e_j)\Bigr)d\phi(e_j) \nonumber\\
   &   +2 R^N\Bigl(   \overline{\nabla}_{\nabla f}\overline{\Delta}\tau(\phi),
                               d\phi(e_j)\Bigr)d\phi(e_j) \nonumber\\
  &   -  R^N\Bigl( \overline{\Delta}\tau(\phi),
                       \tau(\phi)\Bigr)d\phi(\nabla f),  \nonumber
\end{align}
where the 4-tension field is defined as
\begin{align}  \label{4-harmonic-3}
  \tau_{4}(\phi)
  ={}& \overline{\Delta}^{\,3}\tau(\phi)
      - R^N\Bigl( \overline{\Delta}^{\,2}\tau(\phi),
                               d\phi(e_j)\Bigr)d\phi(e_j)\\
  &   -  R^N\Bigl( \overline{\nabla}_{e_j}\overline{\Delta}\tau(\phi),
                       \tau(\phi)\Bigr)d\phi(e_j) \nonumber\\
  &   + R^N\Bigl(  \overline{\Delta}\tau(\phi),
                     \overline{\nabla}_{e_j}\,\tau(\phi)\Bigr)d\phi(e_j).  \nonumber
\end{align}
Substituting $\tau(\gamma) = \overline{\nabla}_{\gamma'} \gamma'$ and $\overline{\Delta}=-\bigl( \overline{\nabla}_{\gamma'} \bigr)^2$ into \eqref{4-harmonic-3} again, we obtain its reduced form:
 \begin{align}  \label{4-harmonic-4}
  \tau_{4}(\gamma)
  ={}& - \bigl( \overline{\nabla}_{\gamma'} \bigr)^7 \gamma'
        - R^N\Bigl(  \bigl( \overline{\nabla}_{\gamma'} \bigr)^5 \gamma',
                               \gamma' \Bigr)\gamma'\\
  &   +  R^N\Bigl( \bigl( \overline{\nabla}_{\gamma'} \bigr)^4 \gamma',
                       \overline{\nabla}_{\gamma'}\gamma' \Bigr)\gamma' \nonumber\\
  &   - R^N\Bigl(  \bigl( \overline{\nabla}_{\gamma'} \bigr)^3 \gamma',
                    \bigl( \overline{\nabla}_{\gamma'} \bigr)^2 \gamma'\Bigr)\gamma'.  \nonumber
\end{align}
Inserting \eqref{4-harmonic-4} into \eqref{4-harmonic-2} and employing $d\gamma(\nabla f) = f'(s) \gamma'(s)$ leads to
\begin{align} \label{4-harmonic-5}
\tau_{f,4}(\gamma)
    ={ } & f\Big\{- \bigl( \overline{\nabla}_{\gamma'} \bigr)^7 \gamma'
        - R^N\Bigl( \bigl( \overline{\nabla}_{\gamma'} \bigr)^5 \gamma', \gamma' \Bigr)\gamma' \\
    &\quad + R^N\Bigl( \bigl( \overline{\nabla}_{\gamma'} \bigr)^4 \gamma', \overline{\nabla}_{\gamma'} \gamma' \Bigr)\gamma' \nonumber\\
    &\quad - R^N\Bigl( \bigl( \overline{\nabla}_{\gamma'} \bigr)^3 \gamma', \bigl( \overline{\nabla}_{\gamma'} \bigr)^2 \gamma'\Bigr)\gamma' \Big\} \nonumber\\[4pt]
    &\quad - 2f''\bigl( \overline{\nabla}_{\gamma'} \bigr)^5 \gamma'
      - 2f'\bigl( \overline{\nabla}_{\gamma'} \bigr)^6 \gamma'
      - f^{(4)}\bigl( \overline{\nabla}_{\gamma'} \bigr)^3 \gamma' \nonumber\\
    &\quad  - 2f'''\bigl( \overline{\nabla}_{\gamma'} \bigr)^4 \gamma'
     - 2 \bigl( \overline{\nabla}_{\gamma'} \bigr)^2 \bigl( f'\bigl( \overline{\nabla}_{\gamma'} \bigr)^4 \gamma'\bigr) \nonumber\\[4pt]
    &\quad - f'' R^N\Bigl( \bigl( \overline{\nabla}_{\gamma'} \bigr)^3 \gamma', \gamma' \Bigr)\gamma'
      - 2f' R^N\Bigl( \bigl( \overline{\nabla}_{\gamma'} \bigr)^4 \gamma', \gamma' \Bigr)\gamma' \nonumber\\
    &\quad + f' R^N\Bigl( \bigl( \overline{\nabla}_{\gamma'} \bigr)^3 \gamma', \overline{\nabla}_{\gamma'} \gamma' \Bigr)\gamma'. \nonumber
\end{align}
Noting that
$$\bigl( \overline{\nabla}_{\gamma'} \bigr)^2 \Bigl( f'\bigl( \overline{\nabla}_{\gamma'} \bigr)^4 \gamma'\Bigr)
= f''' (\overline{\nabla}_{\gamma'})^4 \gamma'
  + 2 f'' (\overline{\nabla}_{\gamma'})^5 \gamma'
  + f' (\overline{\nabla}_{\gamma'})^6 \gamma',$$
the above expression \eqref{4-harmonic-5} yields the desired result.
\end{proof}
For $n=2$, let $\gamma \colon I \to \big(N^2(C), \langle \cdot , \cdot \rangle\big)$ be a smooth curve parametrized by arclength in a 2-dimensional space form with orthonormal frame field $\{\overrightarrow{T},\overrightarrow{N}\}$ satisfying the Frenet formulas
\begin{equation}\label{Frenet-1}
  \overline{\nabla}_{\gamma'}\begin{pmatrix} \overrightarrow{T} \\ \overrightarrow{N} \end{pmatrix}
=\begin{pmatrix} 0  & \kappa \\ -\kappa & 0 \end{pmatrix}
\begin{pmatrix} \overrightarrow{T} \\ \overrightarrow{N} \end{pmatrix},
\end{equation}
where $\overrightarrow{T}:=\gamma'$ is the unit tangent vector field, $\overrightarrow{N}$ the unit normal vector field, and $\kappa$ the curvature of $\gamma$. If the curvature $\kappa$ is constant, we have
\begin{equation}\label{Frenet-2}
\left\{
\begin{array}{l}
\langle \overrightarrow{T},\overrightarrow{T} \rangle = \langle \overrightarrow{N},\overrightarrow{N} \rangle = 1,\quad \langle \overrightarrow{T},\overrightarrow{N} \rangle = 0; \\[6pt]
\bigl(\overline{\nabla}_{\gamma'}\bigr)^{2p} \gamma' = (-1)^{p}\kappa^{2p}\overrightarrow{T},\qquad p \geq 1; \\[6pt]
\bigl(\overline{\nabla}_{\gamma'}\bigr)^{2p+1} \gamma' = (-1)^{p}\kappa^{2p+1}\overrightarrow{N},\qquad p \geq 0.
\end{array}
\right.
\end{equation}

The following two theorems classify the $f$-3-harmonic and $f$-4-harmonic curves with positive constant geodesic curvature in a 2-dimensional space form $N^2(C)$.
\begin{theorem}\label{curve-3-theorem}
Let $\gamma \colon I \to N^2(C)$ be a proper $f$-3-harmonic curve parametrized by arclength from an open interval of $\mathbb{R}$ into a space form $N^2(C)$ with constant sectional curvature $C$, and let $f \colon I \to \mathbb{R}^+$ be a smooth positive function. Then, provided the curvature $\kappa>0$ is constant, it follows that $C<0$ and $\kappa=\frac{\sqrt{-C}}{2}$.
\end{theorem}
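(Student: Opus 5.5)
The plan is to substitute the Frenet data \eqref{Frenet-1}--\eqref{Frenet-2} into the $f$-3-harmonic curve equation \eqref{3-harmonic-0}, and split the result into its $\overrightarrow{T}$- and $\overrightarrow{N}$-components. Since $N^2(C)$ is a space form, I will use $R^N(X,Y)Z=C\bigl(\langle Y,Z\rangle X-\langle X,Z\rangle Y\bigr)$.

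With $\kappa$ constant, \eqref{Frenet-2} gives
\[
\bigl(\overline{\nabla}_{\gamma'}\bigr)\gamma'=\kappa\overrightarrow{N},\quad \bigl(\overline{\nabla}_{\gamma'}\bigr)^2\gamma'=-\kappa^2\overrightarrow{T},\quad \bigl(\overline{\nabla}_{\gamma'}\bigr)^3\gamma'=-\kappa^3\overrightarrow{N},
\]
\[
\bigl(\overline{\nabla}_{\gamma'}\bigr)^4\gamma'=\kappa^4\overrightarrow{T},\quad \bigl(\overline{\nabla}_{\gamma'}\bigr)^5\gamma'=\kappa^5\overrightarrow{N}.
\]
The three curvature terms then evaluate as follows:
\begin{itemize}
\item $R^N(-\kappa^3\overrightarrow{N},\overrightarrow{T})\overrightarrow{T}=-C\kappa^3\overrightarrow{N}$;
\item $-R^N(-\kappa^2\overrightarrow{T},\kappa\overrightarrow{N})\overrightarrow{T}=-C\kappa^3\overrightarrow{N}$;
\item $f'R^N(-\kappa^2\overrightarrow{T},\overrightarrow{T})\overrightarrow{T}=0$.
\end{itemize}
Hence \eqref{3-harmonic-0} becomes
\[
\bigl(3\kappa^4 f'-\kappa^2 f'''\bigr)\overrightarrow{T}+\bigl(f(\kappa^5-2C\kappa^3)-3\kappa^3 f''\bigr)\overrightarrow{N}=0 .
\]
Since $\kappa>0$, this is equivalent to the ODE system
\[
f'''=3\kappa^2 f',\qquad f''=\tfrac{\kappa^2-2C}{3}\,f .
\]

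Next, I differentiate the second equation to get $f'''=\frac{\kappa^2-2C}{3}f'$. Comparing with the first equation gives $\bigl(\frac{\kappa^2-2C}{3}-3\kappa^2\bigr)f'=0$ on $I$.

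Now I split into cases. If $f'\equiv0$ on $I$, then $f$ is constant. A constant $f$ makes $\gamma$ $3$-harmonic, because $\tau_{f,3}=f\tau_3$ by \eqref{3-harmonic-1}. This contradicts properness. Therefore $f'(s_0)\neq0$ for some $s_0$, and since the coefficient in front of $f'$ is a constant, it must vanish: $\kappa^2-2C=9\kappa^2$, i.e. $C=-4\kappa^2$. This forces $C<0$ and $\kappa=\frac{\sqrt{-C}}{2}$.

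As a consistency check, the remaining equation $f''=3\kappa^2 f$ admits positive non-constant solutions, e.g. $\cosh(\sqrt3\kappa s)$. This confirms that such proper curves exist, although existence is not needed for the stated implication.

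The only delicate point is the properness step. I need $f$ non-constant to be equivalent to $\gamma$ not being $3$-harmonic, at least in the direction used here. That is, I must rule out $f$ constant, which I do via $\tau_{f,3}=f\tau_3$ when $\nabla f=0$. Everything else is direct bookkeeping of the curvature signs, which I expect to be the main place where errors could creep in.
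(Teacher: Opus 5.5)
Your proof is correct and follows essentially the same route as the paper. Like the paper, you substitute the Frenet data into \eqref{3-harmonic-0} and split into $\overrightarrow{T}$- and $\overrightarrow{N}$-components to get $f'''=3\kappa^2 f'$ and $3f''=(\kappa^2-2C)f$, then differentiate the second equation and compare with the first to force $\kappa^2-2C=9\kappa^2$ once $f'\not\equiv 0$; your properness step (constant $f$ gives $\tau_{f,3}=f\tau_3$, hence a $3$-harmonic curve) is exactly the direction needed and is stated more cleanly than in the paper.
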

\begin{proof}
Substituting \eqref{Frenet-2} into the $f$-3-harmonic curve equation \eqref{3-harmonic-0} yields the following expansion:
\begin{align}\label{3-curve-1}
& f (-1)^{2}\kappa^5 \overrightarrow{N}
          +Cf \Bigl\{  (-1)^{1}\kappa^3 \overrightarrow{N}
          - \bigl\langle (-1)^{1}\kappa^3 \overrightarrow{N},\,
            \gamma' \bigr\rangle \gamma' \\
& \qquad - \bigl\langle \kappa\overrightarrow{N},\,
            \gamma' \bigr\rangle (-1)^{1}\kappa^2 \overrightarrow{T}
          + \bigl\langle (-1)^{1}\kappa^2 \overrightarrow{T},\,
            \gamma' \bigr\rangle \kappa\overrightarrow{N}
      \Bigr\} \nonumber\\
& \quad + 3f' (-1)^{2}\kappa^4 \overrightarrow{T}
      + 3f'' (-1)^{1}\kappa^3 \overrightarrow{N}
      + f''' (-1)^{1}\kappa^2 \overrightarrow{T} \nonumber\\
& \quad + Cf' \Bigl\{ (-1)^{1}\kappa^2 \overrightarrow{T}
                 - \bigl\langle (-1)^{1}\kappa^2 \overrightarrow{T},\,
                   \gamma' \bigr\rangle \gamma'
               \Bigr\} = 0. \nonumber
\end{align}
Since $\langle \overrightarrow{T}, \gamma' \rangle = 1$ and $\langle \overrightarrow{N}, \gamma' \rangle = 0$, the inner product terms simplify significantly. Specifically,
\[
\langle \kappa^3 \vec N, \gamma' \rangle = 0,\quad
\langle \kappa\vec N, \gamma' \rangle = 0,\quad
\langle \kappa^2 \vec T, \gamma' \rangle = \kappa^2.
\]
Applying these simplifications, equation \eqref{3-curve-1} reduces to
\begin{equation}\label{3-curve-2}
\kappa^3 \bigl( \kappa^2 f  - 2C f  - 3f''  \bigr) \overrightarrow{N}
+\kappa^2\bigl( 3\kappa^2 f'  - f'''  \bigr) \overrightarrow{T}= 0,
\end{equation}
which gives the system
\begin{equation}\label{3-curve-3}
\begin{cases}
(\kappa^2 - 2C)f  - 3f'' = 0,\\[4pt]
3\kappa^2 f' - f''' = 0.
\end{cases}
\end{equation}
Differentiating the first equation and substituting the second yields
\[
(\kappa^2 - 2C)f' = 3f''' = 9\kappa^2 f',
\]
which, for a non-trivial solution with $f' \not\equiv 0$, implies $\kappa^2 - 2C = 9\kappa^2$.
Hence, we obtain $\kappa^2 = -\dfrac{C}{4}$, i.e., $\kappa = \sqrt{-C}/2$ with $C<0$.
Substituting this relation back into the first equation of \eqref{3-curve-3} gives $f'' + \dfrac{3C}{4} f = 0$, which admits the positive solution
\begin{equation*}
f(s) = C_1 e^{\frac{\sqrt{-3C}}{2} s} + C_2 e^{-\frac{\sqrt{-3C}}{2} s},
\end{equation*}
where $C_1, C_2 > 0$ are arbitrary positive constants.

It should be noted that a 3-harmonic curve forces $f$ to be a positive constant. Combining this with \eqref{3-curve-3} yields $\kappa^2=2C$, contradicting the condition $\kappa^2=-C/4$. Therefore, for a curve $\gamma\colon I\to N^2(C)$ with positive constant geodesic curvature, $\gamma$ is a proper $f$-3-harmonic curve if and only if
$C<0$ and $\kappa=\frac{\sqrt{-C}}{2}.$
\end{proof}
\begin{theorem}\label{curve-4-theorem}
Let $\gamma \colon I \to N^2(C)$ be a proper $f$-4-harmonic curve parametrized by arclength from an open interval of $\mathbb{R}$ into a space form $N^2(C)$ with constant sectional curvature $C$, and let $f \colon I \to \mathbb{R}^+$ be a smooth positive function. Then, provided the curvature $\kappa>0$ is constant, it follows that $C<0$ and $\kappa=\frac{\sqrt{-2C}}{2}.$
\end{theorem}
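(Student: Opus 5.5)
The plan is to mirror the proof of Theorem~\ref{curve-3-theorem}. I would substitute the Frenet relations \eqref{Frenet-2} into the $f$-4-harmonic curve equation \eqref{4-harmonic-0}, using the space form curvature
\[
R^N(X,Y)Z=C\bigl(\langle Y,Z\rangle X-\langle X,Z\rangle Y\bigr).
\]
This should split \eqref{4-harmonic-0} into an $\overrightarrow{T}$-component and an $\overrightarrow{N}$-component, and hence into a linear ODE system for $f$ with constant coefficients. The relevant derivatives are
\[
(\overline{\nabla}_{\gamma'})^3\gamma'=-\kappa^3\overrightarrow{N},\quad (\overline{\nabla}_{\gamma'})^4\gamma'=\kappa^4\overrightarrow{T},\quad (\overline{\nabla}_{\gamma'})^5\gamma'=\kappa^5\overrightarrow{N},
\]
\[
(\overline{\nabla}_{\gamma'})^6\gamma'=-\kappa^6\overrightarrow{T},\quad (\overline{\nabla}_{\gamma'})^7\gamma'=-\kappa^7\overrightarrow{N}.
\]

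First I treat the curvature terms one at a time. The term $R^N(X,\gamma')\gamma'$ equals $CX$ when $X\parallel\overrightarrow{N}$ and vanishes when $X\parallel\overrightarrow{T}$. Consequently, each of the three curvature terms inside the $f$-bracket contributes $-C\kappa^5\overrightarrow{N}$, so the bracket equals $f(\kappa^7-3C\kappa^5)\overrightarrow{N}$. Among the remaining terms, $f'R^N\bigl((\overline{\nabla}_{\gamma'})^4\gamma',\gamma'\bigr)\gamma'$ vanishes. The term $f'R^N\bigl((\overline{\nabla}_{\gamma'})^3\gamma',\overline{\nabla}_{\gamma'}\gamma'\bigr)\gamma'$ also vanishes, because both arguments are multiples of $\overrightarrow{N}$. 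The term $-f''R^N(\cdots)$ gives $C\kappa^3f''\overrightarrow{N}$. Collecting components (and dividing by $\kappa^3\neq0$ and $4\kappa^4\neq0$), I expect the system
\begin{equation*}
\begin{cases}
f^{(4)}-(6\kappa^2-C)f''+(\kappa^4-3C\kappa^2)f=0,\\
f'''-\kappa^2 f'=0.
\end{cases}
\end{equation*}

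Next I solve the tangential equation: since $\kappa>0$, it gives $f=c_0+c_1e^{\kappa s}+c_2e^{-\kappa s}$. Hence $f''=\kappa^2(f-c_0)$ and $f^{(4)}=\kappa^4(f-c_0)$. Substituting these into the normal equation yields the identity
\[
-2\kappa^2(2\kappa^2+C)\,f+c_0\kappa^2(5\kappa^2-C)=0 .
\]
If $f$ is nonconstant, so that $c_1$ or $c_2$ is nonzero, then $f$ and $1$ are linearly independent. Both coefficients must then vanish, giving $\kappa^2=-C/2$, which forces $C<0$ and $\kappa=\sqrt{-2C}/2$. In that case $5\kappa^2-C=-7C/2\neq0$, so $c_0=0$, and $f=c_1e^{\kappa s}+c_2e^{-\kappa s}$ with $c_1,c_2\ge0$ not both zero is positive, confirming existence. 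If instead $f$ is constant, the curve is $4$-harmonic, since $\tau_{f,4}=f\tau_4$; this contradicts properness.

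I expect the main obstacle to be the sign bookkeeping in the curvature terms of \eqref{4-harmonic-0}, in particular $-R^N\bigl((\overline{\nabla}_{\gamma'})^3\gamma',(\overline{\nabla}_{\gamma'})^2\gamma'\bigr)\gamma'$, where two minus signs interact. An error there would shift the coefficient $-3C\kappa^5$ and with it the final value of $\kappa$. I would double-check this by comparing with the $k=3$ computation in \eqref{3-curve-2}.
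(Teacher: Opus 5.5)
Your proposal is correct and follows the paper's proof. Substituting \eqref{Frenet-2} into \eqref{4-harmonic-0} gives exactly the system \eqref{4-curve-3}, and your curvature bookkeeping is right: the three bracketed terms each give $-C\kappa^5\overrightarrow{N}$, the two $f'$-curvature terms vanish, and the $f''$-term gives $C\kappa^3 f''\overrightarrow{N}$. The only difference is the final elimination: you solve $f'''=\kappa^2 f'$ explicitly and use the linear independence of $1$ and $f$, whereas the paper differentiates \eqref{4-curve-4} and uses $f'\not\equiv 0$; your version has the small advantage of showing explicitly why properness forces $f$ to be nonconstant and why the constant $c_0$ vanishes.
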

\begin{proof}
After substituting \eqref{Frenet-2} into the $f$-4-harmonic curve equation \eqref{4-harmonic-0}, one arrives at
\begin{align}\label{4-curve-1}
& -f(-1)^3 \kappa^7 \overrightarrow{N}
+ Cf \Bigl\{ -(-1)^2 \kappa^5 \overrightarrow{N}
           + \big\langle (-1)^2 \kappa^5 \overrightarrow{N},\; \gamma' \big\rangle \gamma'  \\
& \qquad\quad + \big\langle \kappa \overrightarrow{N},\; \gamma' \big\rangle (-1)^2 \kappa^4 \overrightarrow{T}
           - \big\langle (-1)^2 \kappa^4 \overrightarrow{T},\; \gamma' \big\rangle \kappa \overrightarrow{N} \nonumber \\
& \qquad\quad - \big\langle (-1)^1 \kappa^2 \overrightarrow{T},\; \gamma' \big\rangle (-1)^1 \kappa^3 \overrightarrow{N}
           + \big\langle (-1)^1 \kappa^3 \overrightarrow{N},\; \gamma' \big\rangle (-1)^1 \kappa^2 \overrightarrow{T}
         \Bigr\}  \nonumber \\[6pt]
& - f^{(4)} (-1)^1 \kappa^3 \overrightarrow{N}
   - 4 f''' (-1)^2 \kappa^4 \overrightarrow{T}
   - 6 f'' (-1)^2 \kappa^5 \overrightarrow{N}
   - 4 f' (-1)^3 \kappa^6 \overrightarrow{T} \nonumber \\[6pt]
& - C f'' \Big( (-1)^1 \kappa^3 \overrightarrow{N}
            - \big\langle (-1)^1 \kappa^3 \overrightarrow{N},\; \gamma' \big\rangle \gamma' \Big) \nonumber \\
& - 2 C f' \Big( (-1)^2 \kappa^4 \overrightarrow{T}
            - \big\langle (-1)^2 \kappa^4 \overrightarrow{T},\; \gamma' \big\rangle \gamma' \Big) \nonumber \\
& + C f' \Big( \big\langle \kappa \overrightarrow{N},\; \gamma' \big\rangle (-1)^1 \kappa^3 \overrightarrow{N}
            - \big\langle (-1)^1 \kappa^3 \overrightarrow{N},\; \gamma' \big\rangle \kappa \overrightarrow{N} \Big) = 0. \nonumber
\end{align}
After simplification, we obtain
\begin{equation}\label{4-curve-2}
\kappa^3 \Bigl( f^{(4)} - 6 \kappa^2 f'' + C f'' + \kappa^4 f - 3C \kappa^2 f \Bigr) \overrightarrow{N}
+ 4\kappa^4 \Bigl( \kappa^2 f' - f''' \Bigr) \overrightarrow{T}
= 0,
\end{equation}
which is equivalent to the system
\begin{equation}\label{4-curve-3}
\begin{cases}
f^{(4)} - 6 \kappa^2 f'' + C f'' + \kappa^4 f - 3C \kappa^2 f = 0,\\[4pt]
\kappa^2 f' - f''' = 0.
\end{cases}
\end{equation}
Differentiating the second equation in \eqref{4-curve-3} and substituting into the first yields the reduced condition
\begin{equation}\label{4-curve-4}
(5\kappa^2 - C) f'' - \kappa^2 (\kappa^2 - 3C) f = 0.
\end{equation}
Now, differentiating \eqref{4-curve-4} and substituting $f''' = \kappa^2 f'$ from \eqref{4-curve-3} yields $\kappa = \frac{\sqrt{-2C}}{2}$ with $C<0$. Under this condition, \eqref{4-curve-4} simplifies to
$f'' + \dfrac{C}{2} f = 0,$
which admits the positive solution
\begin{equation*}
f(s) = C_3 e^{\frac{\sqrt{-2C}}{2} s} + C_4 e^{-\frac{\sqrt{-2C}}{2} s},
\end{equation*}
with $C_3, C_4 > 0$ arbitrary positive constants.

If $\gamma$ is a $4$-harmonic curve, then $f$ must be a positive constant. Substituting this into \eqref{4-curve-3} yields $\kappa^2=3C$, contradicting $\kappa^2=-C/2$. Consequently, for a curve $\gamma\colon I\to N^2(C)$ with positive constant geodesic curvature,
$\gamma$ is proper $f$-4-harmonic if and only if $C<0$ and $\kappa=\frac{\sqrt{-2C}}{2}$.
\end{proof}

\begin{corollary}
A 2-dimensional space form $N^2(C)$ with $C\ge 0$ admits no proper $f$-3-harmonic or $f$-4-harmonic curves of positive constant geodesic curvature.
\end{corollary}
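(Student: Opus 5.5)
The corollary follows directly from Theorems~\ref{curve-3-theorem} and~\ref{curve-4-theorem}, argued by contradiction. Suppose $C\ge 0$ and $\gamma\colon I\to N^2(C)$ is a proper $f$-3-harmonic curve, parametrized by arclength, with constant geodesic curvature $\kappa>0$. Theorem~\ref{curve-3-theorem} then forces $C<0$ and $\kappa=\sqrt{-C}/2$, which contradicts $C\ge 0$. The $f$-4-harmonic case is the same, using Theorem~\ref{curve-4-theorem}: it would force $C<0$ and $\kappa=\sqrt{-2C}/2$, again impossible.

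For robustness I would also trace the conclusion back to the ODE systems \eqref{3-curve-3} and \eqref{4-curve-3}, since the theorems' proofs divided by $f'$. Properness is the key input: if $f$ were constant, the systems reduce to the $k$-harmonic conditions, and the curve would be $k$-harmonic rather than proper. So $f'\not\equiv 0$, and on a subinterval where $f'\neq 0$ the division is legitimate.

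In the $k=3$ case, differentiating the first equation of \eqref{3-curve-3} and using the second gives $(\kappa^2-2C)f'=9\kappa^2 f'$. Hence $8\kappa^2=-2C$, so $C=-4\kappa^2<0$.

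In the $k=4$ case, start from \eqref{4-curve-4}, which is $(5\kappa^2-C)f''=\kappa^2(\kappa^2-3C)f$. Differentiate it and substitute $f'''=\kappa^2 f'$ to get $(5\kappa^2-C)\kappa^2 f'=\kappa^2(\kappa^2-3C)f'$. Since $f'\neq0$ and $\kappa>0$, this gives $5\kappa^2-C=\kappa^2-3C$, i.e.\ $4\kappa^2=-2C$, so $C=-2\kappa^2<0$.

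In both cases a positive $\kappa$ forces $C$ strictly negative, contradicting $C\ge0$. The only delicate point is making sure properness genuinely excludes $f'\equiv 0$ on every subinterval. I would handle this by restricting to an open subinterval where $f'\neq0$ and noting that $\kappa$ and $C$ are global constants, so the relation obtained there holds everywhere.
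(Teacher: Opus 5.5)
Your proposal is correct and follows the paper, where the corollary is an immediate consequence of Theorems~\ref{curve-3-theorem} and~\ref{curve-4-theorem}. Your extra check, that properness forces $f'\not\equiv 0$ because a constant $f$ gives $\tau_{f,k}=f\tau_k$, makes explicit the division by $f'$ that those proofs use implicitly. A single point $s_0$ with $f'(s_0)\neq 0$ already suffices, so you do not need to restrict to a subinterval.
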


\begin{remark}
In the $f$-biharmonic case, Ou proved in \cite{Ou2014-f-biharmonic} that every arclength-parametrized $f$-biharmonic curve with constant geodesic curvature in a space form $N^n(C)$ is necessarily biharmonic. In contrast, Theorems \ref{curve-3-theorem} and \ref{curve-4-theorem} above demonstrate the existence of arclength-parametrized proper $f$-3-harmonic and $f$-4-harmonic curves with positive constant geodesic curvature in the 2-dimensional hyperbolic space $\mathbb{H}^2$.
\end{remark}

\section{Examples of proper $f$-polyharmonic functions and maps}

The aim of this section is to present several examples of proper $f$-$k$-harmonic functions and maps.
Denote by $\mathbb{R}$ and $\mathbb{R}^+$ the sets of real and positive real numbers, respectively, and by $\mathbb{R}^n$ the $n$-dimensional Euclidean space with standard coordinates $x = (x_1, x_2, \dots, x_n)$.
Denote by $\mathbb{R}^n \setminus \{0\}$ the punctured Euclidean space (i.e., $\mathbb{R}^n$ with the origin removed), and set $r = |x| = \sqrt{\sum_{i=1}^n x_i^2}$, the distance from $x$ to the origin.

A smooth function $u: M \to \mathbb{R}$ is called $f$-harmonic if it satisfies the $f$-Laplace equation (see \cite{Ou2014-f-harmonic})
\begin{equation}\label{1-function}
 \Delta_f u:= \Delta u + \langle \nabla \ln f, \nabla u \rangle = 0.
\end{equation}

The following characterization of $f$-biharmonic functions was given by Ou (\cite{Ou2014-f-biharmonic}, Proposition 2.5).

\begin{proposition}\label{pro-1}{\rm(\cite{Ou2014-f-biharmonic})}
A smooth function $u: M \to \mathbb{R}$ on a Riemannian manifold $(M, \langle \cdot\,, \cdot \rangle)$ is $f$-biharmonic if and only if
\begin{equation}\label{2-function-1}
 \Delta_{f,2}u:= \Delta(f \Delta u) = 0,
\end{equation}
or equivalently,
\begin{equation}\label{2-function-2}
  f \Delta^2 u + (\Delta f) \Delta u + 2 \langle \nabla f, \nabla \Delta u \rangle = 0.
\end{equation}
\end{proposition}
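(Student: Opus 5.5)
The proposition is the case $N=\mathbb{R}$, $k=2$ of Theorem~\ref{variation-F}, so I will specialise the general tension field rather than recompute the first variation. For a function $u\colon M\to\mathbb{R}$ the pullback bundle $u^{-1}T\mathbb{R}$ is trivial, with flat induced connection, and $R^N\equiv 0$. Sections are functions, and the rough Laplacian $\overline{\Delta}=-\operatorname{tr}_g\overline{\nabla}^2$ becomes the negative of the Beltrami Laplacian. Thus $\overline{\Delta}=-\Delta$ with the paper's convention $\Delta=\sum_i(e_ie_i-\nabla_{e_i}e_i)$, and the tension field is $\tau(u)=\Delta u$.

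Take $s=1$ in \eqref{f-2s}. All curvature terms vanish, and the sum over $l$ is empty anyway. Hence $\tau_{f,2}(u)=\overline{\Delta}\bigl(f\,\tau(u)\bigr)=-\Delta(f\Delta u)$. One can also read this off from the first line of the $k=2$ remark, \eqref{f-harmonic-3}. By Theorem~\ref{variation-F}, $u$ is $f$-biharmonic if and only if $\tau_{f,2}(u)=0$, that is, if and only if $\Delta(f\Delta u)=0$, which is \eqref{2-function-1}.

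For the equivalent form \eqref{2-function-2}, I will expand with the product rule for the Laplacian on functions:
\[
\Delta(fh)=f\Delta h+h\Delta f+2\langle\nabla f,\nabla h\rangle,
\]
taking $h=\Delta u$. This gives
\[
\Delta(f\Delta u)=f\Delta^2u+(\Delta f)\Delta u+2\langle\nabla f,\nabla\Delta u\rangle .
\]
As a consistency check, the last line of \eqref{f-harmonic-3} yields the same thing. There $\tau_2(u)=\overline{\Delta}\tau(u)=-\Delta^2u$ and $\overline{\nabla}_{\nabla f}\tau(u)=\langle\nabla f,\nabla\Delta u\rangle$. Substituting, $\tau_{f,2}(u)=-f\Delta^2u-(\Delta f)\Delta u-2\langle\nabla f,\nabla\Delta u\rangle$, which agrees.

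The only step that needs care is the sign convention relating $\overline{\Delta}$ to $\Delta$. An overall sign does not affect the vanishing condition, so that convention is the main thing to track, and there is no genuine obstacle.
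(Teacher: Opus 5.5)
Your proposal is correct and takes essentially the route the paper relies on. The paper gives no proof of its own and cites Ou for this result, but its proof of the analogous $f$-3-harmonic characterization (Proposition~\ref{3-proposition}) is the same as yours: set $N=\mathbb{R}$ in the general $f$-$k$-tension field, so that $R^N=0$, $\overline{\Delta}=-\Delta$ and $\tau(u)=\Delta u$, then expand with the product rule $\Delta(fh)=f\Delta h+h\Delta f+2\langle\nabla f,\nabla h\rangle$. Your sign bookkeeping is accurate, including the consistency check against the last line of the $k=2$ remark.
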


The following result provides a characterization of $f$-3-harmonic functions on a Riemannian manifold.
\begin{proposition}\label{3-proposition}
A smooth function $u: M \to \mathbb{R}$ on a Riemannian manifold $(M, \langle \cdot\,,\cdot \rangle)$ is $f$-$3$-harmonic if and only if
\begin{equation}\label{3-function-1}
 \Delta_{f,3}u:= \Delta\big( \operatorname{div}(f \nabla \Delta u) \big) = 0,
\end{equation}
or equivalently,
\begin{equation}\label{3-function-2}
 f \Delta^3 u + (\Delta f)\Delta^2 u + 2\langle \nabla f, \nabla\Delta^2 u \rangle + \Delta\langle \nabla f, \nabla\Delta u \rangle = 0.
\end{equation}
\end{proposition}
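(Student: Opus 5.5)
The plan is to specialize Theorem~\ref{variation-F} to the case $N=\mathbb{R}$ with its flat metric and $k=3$, i.e.\ $s=1$ in \eqref{f-2s+1}. Since $R^{\mathbb{R}}\equiv 0$, every curvature term in \eqref{f-2s+1} drops out. For $s=1$ the sum over $l$ is empty in any case. What remains is
\[
\tau_{f,3}(u)=\overline{\Delta}\bigl[f\,\overline{\Delta}\tau(u)-\overline{\nabla}_{\nabla f}\tau(u)\bigr].
\]
The same expression can be read off from the first line of \eqref{3-harmonic-1} with the curvature terms deleted.

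Next I translate the pullback-bundle operators into scalar operators on $M$. For a function $u$, the pullback bundle $u^{-1}T\mathbb{R}$ is trivial and $\overline{\nabla}$ is ordinary differentiation, so $\overline{\nabla}_X w = X(w)$. With the paper's conventions $\overline{\Delta}=-\sum_i(\overline{\nabla}_{e_i}\overline{\nabla}_{e_i}-\overline{\nabla}_{\nabla_{e_i}e_i})$ and $\Delta=\sum_i(e_ie_i-\nabla_{e_i}e_i)$, this gives $\overline{\Delta}w=-\Delta w$ on functions. Also $\tau(u)=\operatorname{tr}\nabla du=\Delta u$. Hence $\overline{\Delta}\tau(u)=-\Delta^2u$ and $\overline{\nabla}_{\nabla f}\tau(u)=\langle\nabla f,\nabla\Delta u\rangle$, and substituting,
\[
\tau_{f,3}(u)=-\Delta\bigl[-f\Delta^2u-\langle\nabla f,\nabla\Delta u\rangle\bigr]
=\Delta\bigl(f\Delta(\Delta u)+\langle\nabla f,\nabla\Delta u\rangle\bigr).
\]
By the product rule $\operatorname{div}(f\nabla w)=f\Delta w+\langle\nabla f,\nabla w\rangle$ with $w=\Delta u$, the bracket equals $\operatorname{div}(f\nabla\Delta u)$. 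So $\tau_{f,3}(u)=\Delta_{f,3}u$, and Theorem~\ref{variation-F} gives that $u$ is $f$-3-harmonic if and only if \eqref{3-function-1} holds.

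For the equivalent form \eqref{3-function-2}, I expand $\Delta(f\Delta^2u)$ with the scalar Leibniz rule $\Delta(fh)=f\Delta h+(\Delta f)h+2\langle\nabla f,\nabla h\rangle$, taking $h=\Delta^2u$. The term $\Delta\langle\nabla f,\nabla\Delta u\rangle$ is left as it stands. Adding these gives exactly \eqref{3-function-2}.

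The computation is routine. The only real obstacle is keeping the sign conventions straight: $\overline{\Delta}$ is the positive rough Laplacian while $\Delta$ is the Beltrami Laplacian with the opposite sign, and $\tau=-\overline{\Delta}\phi$. One factor of $-1$ comes from the outer $\overline{\Delta}$, and another must be absorbed inside the bracket. As a consistency check I will verify that the analogous reduction of \eqref{f-2s} for $s=1$ reproduces Proposition~\ref{pro-1}, namely $\tau_{f,2}(u)=\overline{\Delta}(f\Delta u)=-\Delta(f\Delta u)$, which vanishes if and only if $\Delta(f\Delta u)=0$.
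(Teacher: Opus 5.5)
Your proposal is correct and follows essentially the paper's route: specialize the $f$-3-tension field to $N=\mathbb{R}$, where the curvature terms vanish, $\overline{\Delta}=-\Delta$ on functions and $\tau(u)=\Delta u$, and then use the Leibniz rules for $\Delta(fh)$ and $\operatorname{div}(f\nabla w)$. The only difference is the order: the paper substitutes into the expanded form of \eqref{3-harmonic-1} (via $\tau_3(u)=\Delta^3u$) to get \eqref{3-function-2} first and then regroups it into \eqref{3-function-1}, whereas you start from the compact form $\overline{\Delta}\bigl[f\overline{\Delta}\tau(u)-\overline{\nabla}_{\nabla f}\tau(u)\bigr]$, obtain \eqref{3-function-1} directly, and then expand to \eqref{3-function-2}.
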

\begin{proof}
Let $t$ be the standard coordinate on $\mathbb{R}$.
First observe that
\[
\tau(u)=(\Delta u)\frac{\partial}{\partial t},\qquad
\overline{\Delta} \tau(u)=-(\Delta^2 u)\frac{\partial}{\partial t}.
\]
Moreover, by the definition \eqref{3-harmonic-2} of the $3$-tension field we have
\[
\tau_3(u) = \overline{\Delta}^2 \tau(u) = (\Delta^3 u)\frac{\partial}{\partial t}.
\]
Substituting these into \eqref{3-harmonic-1} yields that $u$ is $f$-$3$-harmonic if and only if
\begin{equation}\label{3-function-3}
  f \Delta^3 u + (\Delta f)\Delta^2 u + 2\langle \nabla f, \nabla\Delta^2 u \rangle + \Delta\langle \nabla f, \nabla\Delta u \rangle = 0.
\end{equation}

Applying the Bochner formula to $\Delta(f \Delta^2 u)$ gives
\[
\Delta(f \Delta^2 u) = f \Delta^3 u + (\Delta f)\Delta^2 u + 2\langle \nabla f, \nabla\Delta^2 u \rangle,
\]
which leads to
\begin{equation}\label{3-function-4}
  \Delta\big(f \Delta^2 u + \langle \nabla f, \nabla\Delta u \rangle\big) = 0.
\end{equation}

A direct computation of the divergence term yields
\[
\operatorname{div}(f \nabla \Delta u) = f \Delta^2 u + \langle \nabla f, \nabla\Delta u \rangle.
\]
Combining this with \eqref{3-function-4} we obtain
\[
\Delta\big( \operatorname{div}(f \nabla \Delta u) \big) = 0.
\]
Hence, the proof is completed.
\end{proof}

\begin{remark}
Setting $N = \mathbb{R}$ in \eqref{f-2s} and \eqref{f-2s+1} yields the equations that define the so-called \emph{$f$-$k$-harmonic function}. More precisely, for an integer $k \ge 1$, let $s = \lfloor k/2 \rfloor$. A function $u$ is called $f$-$k$-harmonic if it satisfies
\begin{equation}
\Delta_{f,k}u:=
\begin{cases}
\Delta^s( f \,\Delta^s u) = 0, & k = 2s,\\[4pt]
\Delta^s\bigl( \operatorname{div}(f \nabla \Delta^s u) \bigr) = 0, & k = 2s+1,
\end{cases}
\label{eq:f-k-harmonic}
\end{equation}
where $s = 1,2,3,\dots$ in the even case and $s = 0,1,2,\dots$ in the odd case.
\end{remark}

Ou (\cite{Ou2014-f-harmonic, Ou2014-f-biharmonic, Ou2017}) has provided numerous examples of $f$-harmonic and $f$-2-harmonic functions and maps. A natural question arising from $f$-harmonic to $f$-2-harmonic functions is whether proper $f$-$k$-harmonic functions exist for $k \geq 3$. To facilitate the construction of further examples of proper $f$-polyharmonic functions and maps, we first state the following two key lemmas.

\begin{lemma}\label{compute-r}
Let $r = |x|$ denote the radial distance function on the punctured Euclidean space $\mathbb{R}^n \setminus \{0\}$, and let $h(r)$ be a smooth radial function depending only on $r$. Then the following identities obviously hold:
\[
\nabla h(r) = \frac{h'(r)}{r} x, \quad
\operatorname{div}\bigl( h(r) x \bigr) = n h(r) + r h'(r), \quad
\Delta h(r) = h''(r) + \frac{n-1}{r} h'(r).
\]
In particular, for any constant $\alpha \in \mathbb{R}$ and any positive integer $k$, we obtain
\begin{align}
\nabla r^\alpha &= \alpha r^{\alpha-2} x, \label{eq:grad}\\
\operatorname{div}(r^\alpha x) &= (n+\alpha) r^\alpha, \label{eq:div}\\
\Delta r^\alpha &= \alpha(n+\alpha-2) r^{\alpha-2}, \label{eq:laplacian}\\
\Delta^k r^\alpha &= \left( \prod_{j=0}^{k-1} (\alpha-2j)(\alpha+n-2-2j) \right) r^{\alpha-2k}, \label{eq:polygrad}\\
\Delta(x_i r^\alpha) &= \alpha(n+\alpha) x_i r^{\alpha-2}, \label{eq:laplacianxi}\\
\Delta^k (x_i r^\alpha) &= \left( \prod_{j=0}^{k-1} (\alpha-2j)(\alpha+n-2j) \right) x_i r^{\alpha-2k}. \label{eq:polyxi}
\end{align}
\end{lemma}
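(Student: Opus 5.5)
The first three identities are elementary computations in Cartesian coordinates, and each later formula is a direct consequence of them.

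For the gradient, $\partial_i r = x_i/r$, so $\partial_i h(r) = h'(r)x_i/r$, i.e.\ $\nabla h = \frac{h'(r)}{r}x$. For the divergence,
\[
\operatorname{div}(h x)=\sum_i \partial_i(h x_i)=\sum_i\Bigl(\frac{h'}{r}x_i^2+h\Bigr)=rh'+nh .
\]
For the Laplacian, $\Delta h=\operatorname{div}\bigl(\frac{h'}{r}x\bigr)$; applying the divergence identity with $\frac{h'}{r}$ in place of $h$ gives
\[
n\frac{h'}{r}+r\Bigl(\frac{h''}{r}-\frac{h'}{r^2}\Bigr)=h''+\frac{n-1}{r}h'.
\]

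Taking $h=r^\alpha$ in these three identities gives \eqref{eq:grad}, \eqref{eq:div} and \eqref{eq:laplacian}, since
\[
\alpha(\alpha-1)+(n-1)\alpha=\alpha(n+\alpha-2).
\]
Formula \eqref{eq:polygrad} follows by induction on $k$: $\Delta^{k}r^\alpha=\Delta\bigl(c_{k-1}r^{\alpha-2(k-1)}\bigr)$, and applying \eqref{eq:laplacian} with exponent $\alpha-2(k-1)$ contributes the factor
\[
(\alpha-2(k-1))(n+\alpha-2-2(k-1)),
\]
which is exactly the $j=k-1$ term of the product.

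For \eqref{eq:laplacianxi}, use the product rule $\Delta(x_i r^\alpha)=x_i\Delta r^\alpha+2\langle\nabla x_i,\nabla r^\alpha\rangle+r^\alpha\Delta x_i$. Here $\Delta x_i=0$ and $\langle e_i,\alpha r^{\alpha-2}x\rangle=\alpha x_ir^{\alpha-2}$, so
\[
\Delta(x_i r^\alpha)=\bigl[\alpha(n+\alpha-2)+2\alpha\bigr]x_ir^{\alpha-2}=\alpha(n+\alpha)x_ir^{\alpha-2}.
\]
Formula \eqref{eq:polyxi} then follows by the same induction, with the exponent dropping by $2$ at each step, so the $j$-th step contributes $(\alpha-2j)(\alpha+n-2j)$.

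There is no real obstacle; the only points needing care are the index bookkeeping in the two inductions (the $j$-th factor uses the exponent $\alpha-2j$) and noting that everything is smooth on $\mathbb{R}^n\setminus\{0\}$ for arbitrary real $\alpha$. Degenerate cases, where some factor vanishes, are covered automatically because the formulas remain valid with a zero coefficient.
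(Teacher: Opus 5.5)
Your proof is correct and follows essentially the same route as the paper: a direct coordinate computation for the gradient and divergence, the product rule $\Delta(uv)=u\Delta v+2\langle\nabla u,\nabla v\rangle+v\Delta u$ for $\Delta(x_i r^\alpha)$, and induction on $k$ with the exponent dropping by $2$ at each step. The only differences are minor: you also verify the general radial identities for $h(r)$, which the paper takes as obvious, and you obtain $\Delta r^\alpha$ from $h''+\frac{n-1}{r}h'$ rather than, as the paper does, from $\operatorname{div}(\alpha r^{\alpha-2}x)$.
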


\begin{proof}
We verify formulas \eqref{eq:grad}-\eqref{eq:polyxi} by direct computation and induction.

\underline{Step 1. Proof of \eqref{eq:grad} and \eqref{eq:div}.}
Since $\partial_i r = x_i / r$, we have
\begin{equation}\label{eq:partial}
  \partial_i r^\alpha = \alpha r^{\alpha-1} \partial_i r = \alpha r^{\alpha-2} x_i,
\end{equation}
which immediately yields $\nabla r^\alpha = \alpha r^{\alpha-2} x$. For the divergence, using the product rule,
\[
\partial_i (r^\alpha x_i) = (\partial_i r^\alpha) x_i + r^\alpha \partial_i x_i = \alpha r^{\alpha-2} x_i^2 + r^\alpha.
\]
Summing over $i = 1,\dots,n$ gives
\[
\operatorname{div}(r^\alpha x) = \sum_{i=1}^n (\alpha r^{\alpha-2} x_i^2 + r^\alpha) = \alpha r^{\alpha-2} r^2 + n r^\alpha = (n+\alpha) r^\alpha.
\]

\underline{Step 2. Proof of \eqref{eq:laplacian} and \eqref{eq:polygrad}.}
Taking the divergence of $\nabla r^\alpha$ and applying \eqref{eq:div} with $\alpha$ replaced by $\alpha-2$, we obtain
\[
\Delta r^\alpha = \operatorname{div}(\nabla r^\alpha) = \operatorname{div}(\alpha r^{\alpha-2} x) = \alpha \bigl[ (n+\alpha-2) r^{\alpha-2} \bigr],
\]
which proves \eqref{eq:laplacian}. For the higher-order case \eqref{eq:polygrad}, we proceed by induction on $k$. The base case $k=1$ is exactly \eqref{eq:laplacian}. Assuming \eqref{eq:polygrad} holds for some $k \ge 1$, we apply $\Delta$ to both sides:
\[
\Delta^{k+1} r^\alpha = \Delta \left( \prod_{j=0}^{k-1} (\alpha-2j)(\alpha+n-2-2j) \cdot r^{\alpha-2k} \right).
\]
Since the product is a constant, using \eqref{eq:laplacian} with $\alpha$ replaced by $\alpha-2k$ yields
\[
\Delta^{k+1} r^\alpha = \prod_{j=0}^{k-1} (\alpha-2j)(\alpha+n-2-2j) \cdot (\alpha-2k)(\alpha-2k+n-2) r^{\alpha-2(k+1)}.
\]
This completes the induction.

\underline{Step 3. Proof of \eqref{eq:laplacianxi} and \eqref{eq:polyxi}.}
Setting \(u = x_i\) and \(v = r^{\alpha}\) in the product rule for the Laplacian:
\[
\Delta(uv) = u\Delta v + 2\nabla u \cdot \nabla v + v\Delta u,
\]
and noting that \(\Delta x_i = 0\) and \(\nabla x_i = e_i\) (the \(i\)-th standard basis vector), we obtain
\[
\Delta(x_i r^{\alpha}) = x_i \Delta r^{\alpha} + 2\,\partial_i r^{\alpha}.
\]
Substituting \eqref{eq:laplacian} and \eqref{eq:partial} gives
\[
\Delta(x_i r^\alpha) = x_i \cdot \alpha(n+\alpha-2) r^{\alpha-2} + 2 \cdot \alpha r^{\alpha-2} x_i = \alpha(n+\alpha) x_i r^{\alpha-2},
\]
which proves \eqref{eq:laplacianxi}. For the higher-order case \eqref{eq:polyxi}, we again proceed by induction on $k$. The base case $k=1$ is \eqref{eq:laplacianxi}. Assuming the formula holds for some $k \ge 1$, we apply $\Delta$ to both sides:
\[
\Delta^{k+1}(x_i r^\alpha) = \Delta\left( \prod_{j=0}^{k-1} (\alpha-2j)(\alpha+n-2j) \cdot x_i r^{\alpha-2k} \right).
\]
Using $\Delta(x_i r^{\beta}) = \beta(n+\beta) x_i r^{\beta-2}$ with $\beta = \alpha-2k$ (which follows from \eqref{eq:laplacianxi} by replacing $\alpha$ with $\beta$), we obtain
\[
\Delta^{k+1}(x_i r^\alpha) = \prod_{j=0}^{k-1} (\alpha-2j)(\alpha+n-2j) \cdot (\alpha-2k)(\alpha-2k+n) x_i r^{\alpha-2(k+1)}.
\]
This completes the induction.
\end{proof}

\begin{lemma}[Radial polyharmonic functions]
\label{lem:radial_polyharmonic}
Let \(n\ge 2\) and \(k\ge 1\) be integers, and let \(u\in C^{\infty}(\mathbb{R}^n\setminus\{0\})\) be a radial function, i.e., \(u(x)=f(r)\) where \(r=|x|>0\). Then \(\Delta^{k} u = 0\) in \(\mathbb{R}^n\setminus\{0\}\) if and only if \(f(r)\) takes one of the following forms:

\begin{itemize}
\item[(i)] \textbf{Case \(n=2\).}
\begin{equation}\label{eq:n2}
f(r)=\sum_{j=0}^{k-1} (C_j + D_j \ln r) r^{2j},\qquad C_j,D_j\in\mathbb{R}.
\end{equation}

\item[(ii)] \textbf{Case \(n\ge 3\).}
Define the exponent sets
\begin{equation*}
\Lambda_1=\{0,2,4,\dots,2k-2\},\qquad
\Lambda_2=\{2-n,4-n,\dots,2k-n\}.
\end{equation*}
\begin{itemize}
\item[(a)] If \(n\) is odd or \(n>2k\), then $\Lambda_1\cap \Lambda_2=\emptyset$, and
\begin{equation}\label{eq:n3-1}
f(r)=\sum_{j=0}^{k-1} A_j r^{2j}\;+\;\sum_{j=0}^{k-1} B_j r^{2j+2-n},\qquad A_j,B_j\in\mathbb{R}.
\end{equation}

\item[(b)] If \(n\) is even and \(4\le n\le 2k\), then $\Lambda_1\cap \Lambda_2=\{0,2,4,\cdots,2k-n\}$, and
\begin{equation}\label{eq:n3-2}
f(r)=\sum_{\lambda\in\Lambda_1\setminus\Lambda_2} A_\lambda r^{\lambda}
\;+\;\sum_{\mu\in\Lambda_2\setminus\Lambda_1} B_\mu r^{\mu}
\;+\;\sum_{\sigma\in\Lambda_1\cap\Lambda_2} r^{\sigma}\bigl(C_\sigma + D_\sigma \ln r\bigr),
\end{equation}
where \(A_\lambda,B_\mu,C_\sigma,D_\sigma\in\mathbb{R}\).
\end{itemize}
\end{itemize}

In the special case \(k=1\) (harmonic functions), every radial solution of \(\Delta u=0\) reduces to
\begin{equation}\label{eq:k=1}
u(x):=
f(r)=
\begin{cases}
C_0 + D_0 \ln r, & n=2,\\[4pt]
A_0 + B_0\, r^{2-n}, & n\ge 3.
\end{cases}
\end{equation}
\end{lemma}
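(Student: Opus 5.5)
The plan is to reduce $\Delta^k u=0$ for radial $u=f(r)$ to a linear ODE of order $2k$ in $r$, and then identify $2k$ linearly independent solutions. By Lemma~\ref{compute-r}, on radial functions $\Delta$ acts as the Euler-type operator $L=\frac{d^2}{dr^2}+\frac{n-1}{r}\frac{d}{dr}$. Hence $\Delta^k u=0$ is equivalent to $L^k f=0$ on $(0,\infty)$. This is a linear ODE of order $2k$ with smooth coefficients on $(0,\infty)$, so its solution space has dimension exactly $2k$. Moreover, the equation is equidimensional. Indeed, $r^2L=\theta(\theta+n-2)$ with $\theta=r\frac{d}{dr}$, and $L(r^\beta g)$ relates to shifting exponents. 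After the substitution $r=e^t$, the equation $L^kf=0$ becomes a constant-coefficient ODE in $t$. Its characteristic polynomial is
\[
P(\alpha)=\prod_{j=0}^{k-1}(\alpha-2j)(\alpha+n-2-2j),
\]
which is exactly the coefficient appearing in \eqref{eq:polygrad}. The roots are therefore $\Lambda_1\cup\Lambda_2$, counted with multiplicity.

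The main step is the root-multiplicity bookkeeping, and I expect it to be the one real obstacle.

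When $n$ is odd, every element of $\Lambda_1$ is even and every element of $\Lambda_2$ is odd, so the two sets are disjoint. When $n>2k$, every element of $\Lambda_2$ is at most $2k-n<0\le\min\Lambda_1$, so again they are disjoint. In either situation all $2k$ roots are simple, giving the basis $r^{2j}$, $r^{2j+2-n}$ and hence \eqref{eq:n3-1}.

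When $n$ is even and $4\le n\le 2k$, an exponent $2j+2-n$ lies in $\Lambda_1$ exactly when $2j+2-n\ge 0$, so $\Lambda_1\cap\Lambda_2=\{0,2,\dots,2k-n\}$. Each such common root is a double root, since each set contains no repeated values. A double root $\sigma$ contributes the pair $e^{\sigma t}$, $te^{\sigma t}$, that is, $r^\sigma$ and $r^\sigma\ln r$. This gives \eqref{eq:n3-2}, with total count $|\Lambda_1\setminus\Lambda_2|+|\Lambda_2\setminus\Lambda_1|+2|\Lambda_1\cap\Lambda_2|=2k$.

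When $n=2$, the two sets coincide: $\Lambda_2=\{0,2,\dots,2k-2\}=\Lambda_1$. Every root $2j$ is then double, giving $r^{2j}$ and $r^{2j}\ln r$, which is \eqref{eq:n2}.

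To make the characteristic-polynomial claim rigorous, I would note that in the variable $t$ the operator becomes $r^{-2}\,\theta(\theta+n-2)$ with $\theta=d/dt$. Composing $k$ of these gives the identity
\[
r^{2k}L^k=\prod_{j=0}^{k-1}(\theta-2j)(\theta+n-2-2j),
\]
where the shifts come from commuting $r^{-2}$ past $\theta$, via $\theta\, r^{-2}=r^{-2}(\theta-2)$. This is the same product as in \eqref{eq:polygrad}, which serves as a consistency check on $r^\alpha$. Linear independence and completeness then follow from standard constant-coefficient ODE theory, together with the dimension count $2k$.

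The case $k=1$ is read off directly. For $n=2$ the only root is $0$, and it is double, giving $C_0+D_0\ln r$. For $n\ge3$ the two roots $0$ and $2-n$ are distinct, giving $A_0+B_0r^{2-n}$.
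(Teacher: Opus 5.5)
Your proposal is correct and follows essentially the same route as the paper. Both use the substitution $t=\ln r$ and the factorization $r^{2k}\Delta^k=\prod_{j=0}^{k-1}(D-2j)(D+n-2-2j)$ on radial functions, then the same multiplicity analysis of the roots $\Lambda_1\cup\Lambda_2$. The paper gets the shifts from its Claim on $\Delta(e^{-2at}h)$ and you get them from $\theta\, r^{-2}=r^{-2}(\theta-2)$, which is the same computation. Your justification of disjointness (by parity for odd $n$, by sign for $n>2k$) and of the intersection $\{0,2,\dots,2k-n\}$ is slightly more explicit than the paper's.
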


\begin{proof}
Set \(t = \ln r\) (\(r>0\)) and define \(\varphi(t) = f(e^t)\).
A direct computation gives
\[
f'(r)=e^{-t}\varphi'(t),\qquad f''(r)=e^{-2t}\bigl(\varphi''(t)-\varphi'(t)\bigr).
\]
The radial Laplacian is
\[
\Delta f(r) = f''(r) + \frac{n-1}{r}f'(r)
= e^{-2t}\bigl(\varphi''(t)+(n-2)\varphi'(t)\bigr).
\]
Let $D=d/dt$, and denote $D^k=d^k/dt^k$ for $k\ge 2$. Set
$$P_1(D):=D^2+(n-2)D.$$ Then
\begin{equation}\label{eq:laplacian_transform}
 \Delta \varphi(t) = \Delta f(r) = e^{-2t} P_1(D) \varphi(t).
\end{equation}

\textbf{Claim.}
For any \(a\ge 0\) and any smooth function \(h(t)\),
\[
\Delta\bigl(e^{-2at}h(t)\bigr) = e^{-2(a+1)t}\Bigl[D^2 + (n-2-4a)D + 2a(2a+2-n)\Bigr]h(t).
\]
\noindent
\textit{Proof of the claim.}
Apply \eqref{eq:laplacian_transform} with \(\varphi(t)=e^{-2at}h(t)\). Compute
\[
\varphi' = e^{-2at}(h'-2ah),\quad
\varphi'' = e^{-2at}(h''-4ah'+4a^2h).
\]
Then
\[
\varphi''+(n-2)\varphi' = e^{-2at}\bigl[h'' + (n-2-4a)h' + 2a(2a+2-n)h\bigr],
\]
which together with \eqref{eq:laplacian_transform} gives the Claim.

Now assume for some \(k\ge 1\) that
\[
\Delta^k f = e^{-2kt} P_k(D)\varphi,
\]
with \(P_k(D)\) a constant-coefficient differential operator.
Applying the Claim with \(a=k\) and \(h = P_k(D)\varphi\) yields
\[
\Delta^{k+1}f = \Delta\bigl(e^{-2kt}P_k(D)\varphi\bigr)
= e^{-2(k+1)t}\Bigl[D^2 + (n-2-4k)D + 2k(2k+2-n)\Bigr]P_k(D)\varphi.
\]
Hence
\begin{align}\label{eq:recurrence}
  P_{k+1}(D) &= \bigl[D^2 + (n-2-4k)D + 2k(2k+2-n)\bigr] P_k(D) \\
             &= (D-2k)\bigl(D + n-2-2k\bigr) P_k(D). \nonumber
\end{align}

Starting from \(P_1(D)= (D-0)(D+n-2-0)\), iteration yields
\begin{equation}\label{eq:Pk}
  P_k(D) = \prod_{j=0}^{k-1} (D-2j)\bigl(D + n-2-2j\bigr).
\end{equation}
Consequently,
\begin{equation}\label{eq:Dkf}
  \Delta^k f = e^{-2kt} \prod_{j=0}^{k-1} (D-2j)\bigl(D + n-2-2j\bigr)\,\varphi(t).
\end{equation}
Since \(e^{-2kt}\neq 0\), the equation \(\Delta^k f = 0\) is equivalent to
\begin{equation}\label{eq:ode}
  \prod_{j=0}^{k-1} (D-2j)\bigl(D + n-2-2j\bigr)\,\varphi(t) = 0.
\end{equation}
This is a constant-coefficient linear ODE of order \(2k\). Its characteristic equation is
\begin{equation}\label{eq:char_eq}
  \prod_{j=0}^{k-1} (\nu-2j)\bigl(\nu + n-2-2j\bigr) = 0.
\end{equation}
The characteristic roots form two families:
\[
\Lambda_1 = \{2j \mid j=0,\dots,k-1\},\qquad
\Lambda_2 = \{2j+2-n \mid j=0,\dots,k-1\}.
\]

\medskip
\textbf{Case \(n=2\).}
Here \(n-2=0\), so the two families of roots coincide. Each root \(2j\) (\(j=0,\dots,k-1\)) appears with multiplicity \(2\). The characteristic equation \eqref{eq:char_eq} becomes
\[
\prod_{j=0}^{k-1} (\nu-2j)^2 = 0.
\]
The general solution of \eqref{eq:ode} is
\[
\varphi(t) = \sum_{j=0}^{k-1} (C_j + D_j t) e^{2jt},\qquad C_j, D_j \in \mathbb{R}.
\]
Substituting \(t = \ln r\) gives
\begin{equation}\label{eq:solution_n2}
  f(r) = \sum_{j=0}^{k-1} (C_j + D_j \ln r) r^{2j},
\end{equation}
which matches the statement for \(n=2\).

\medskip
\textbf{Case \(n\ge 3\).}

\textbf{Subcase (a):}
If \(\Lambda_1\cap\Lambda_2 = \varnothing\) (i.e., \(n\) is odd or \(n>2k\)), all roots are simple. The general solution of \eqref{eq:ode} is
\[
\varphi(t) = \sum_{j=0}^{k-1} A_j e^{2jt} + \sum_{j=0}^{k-1} B_j e^{(2j+2-n)t},\qquad A_j,B_j\in\mathbb{R}.
\]
Returning to \(r=e^t\) gives
\begin{equation}\label{eq:solution_nonresonant}
  f(r) = \sum_{j=0}^{k-1} A_j r^{2j} + \sum_{j=0}^{k-1} B_j r^{2j+2-n}.
\end{equation}

\textbf{Subcase (b):}
If \(\Lambda_1\cap\Lambda_2 \neq \varnothing\) (i.e., \(n\) is even and \(4\le n\le 2k\)), then each common root \(\nu\) has multiplicity \(2\). For such a root, the solution contains a term \(e^{\nu t}(C + D t)\). For roots that belong to only one of the sets, the multiplicity is \(1\). Hence
\[
\varphi(t) = \sum_{\lambda\in\Lambda_1\setminus\Lambda_2} A_\lambda e^{\lambda t}
+ \sum_{\mu\in\Lambda_2\setminus\Lambda_1} B_\mu e^{\mu t}
+ \sum_{\sigma\in\Lambda_1\cap\Lambda_2} e^{\sigma t}\bigl(C_\sigma + D_\sigma t\bigr).
\]
Substituting back \(t=\ln r\) yields
\begin{equation}\label{eq:solution_resonant}
  f(r) = \sum_{\lambda\in\Lambda_1\setminus\Lambda_2} A_\lambda r^{\lambda}
+ \sum_{\mu\in\Lambda_2\setminus\Lambda_1} B_\mu r^{\mu}
+ \sum_{\sigma\in\Lambda_1\cap\Lambda_2} r^{\sigma}\bigl(C_\sigma + D_\sigma \ln r\bigr).
\end{equation}
This is exactly the form stated in the lemma for \(n\ge 3\).
\end{proof}

\begin{remark}
For every $n\ge 2$, the space of radial $k$-harmonic functions has dimension $2k$.
Logarithmic terms arise only in the \emph{resonant} case, i.e., when the two families of exponents
\[
\Lambda_1 = \{2j_1 \mid 0\le j_1\le k-1\},\qquad
\Lambda_2 = \{2j_2+2-n \mid 0\le j_2\le k-1\}
\]
satisfy $\Lambda_1\cap\Lambda_2\neq\varnothing$. This occurs precisely for even $n$ with $2\le n\le 2k$.

Lemma~\ref{lem:radial_polyharmonic} is essentially the radial version of the \emph{Almansi expansion} (cf.~\cite{Aronszajn1983}, Chapter~1.1), which states that any $k$-harmonic function in $\mathbb{R}^n\setminus\{0\}$ can be expressed as
\[
u(x) = \sum_{j=0}^{k-1} r^{2j} u_j(x),
\]
where each $u_j$ is harmonic. In the radial setting, this leads to the following:

\begin{itemize}
\item \textbf{Non-resonant case ($n$ odd or $n>2k$):}
The solution is a linear combination of pure powers as in \eqref{eq:n3-1}.

\item \textbf{Resonant case ($n$ even, $2\le n\le 2k$):}
Each coincident exponent yields at most a term linear in $\ln r$, and thus no higher-order terms appear.
\end{itemize}
\end{remark}

\begin{example}\label{ex-1}
For \(n \geq 3\), define a function \(u\) on \(\mathbb{R}^n \setminus \{0\}\) by \(u(x) = \ln r\).

\begin{enumerate}
    \item If we define a positive function \(f\) on \(\mathbb{R}^n \setminus \{0\}\) by
\[
f(x) = r^{2-n},
\]
then \(u\) is a proper \(f\)-harmonic function on \(\mathbb{R}^n \setminus \{0\}\).

    \item If we define a positive function \(f\) on \(\mathbb{R}^n \setminus \{0\}\) by
\[
f(x) = A r^2 + B r^{4-n}
\]
with \(A, B > 0\) and \(n \neq 4\), then \(u\) is a proper \(f\)-biharmonic function on \(\mathbb{R}^n \setminus \{0\}\).

    \item If we define a positive function \(f\) on \(\mathbb{R}^n \setminus \{0\}\) by
\[
f(x) = A r^{6-n} + B r^{4-n}+Cr^4
\]
with \(A, B, C > 0\) and \(n \notin \{4,6\}\), then \(u\) is a proper \(f\)-3-harmonic function on \(\mathbb{R}^n \setminus \{0\}\).
\end{enumerate}
\end{example}

\begin{proof}
A direct calculation shows
\[
\nabla u = \frac{x}{r^2},\quad
\Delta u = u''(r) + \frac{n-1}{r}u'(r) = \frac{n-2}{r^2},
\quad \nabla\Delta u = -2(n-2)\frac{x}{r^4},
\]
\[
\Delta^2 u = (n-2)\left[\left(\frac{1}{r^2}\right)'' + \frac{n-1}{r}\left(\frac{1}{r^2}\right)'\right]
= -\frac{2(n-2)(n-4)}{r^4},
\]
\[
\Delta^3 u = 2(n-2)(4-n)\left[\left(\frac{1}{r^4}\right)'' + \frac{n-1}{r}\left(\frac{1}{r^4}\right)'\right]
= \frac{8(n-2)(n-4)(n-6)}{r^6}.
\]
Hence:
\begin{itemize}
    \item $u$ is not harmonic for any $n\ge 3$;
    \item $u$ is not biharmonic for any $n\ge 3$ with $n\neq 4$;
    \item $u$ is not $3$-harmonic for any $n\ge 3$ with $n\notin\{4,6\}$.
\end{itemize}
\begin{enumerate}
    \item If \(f(x) = r^{2-n}\), then \(\nabla f = (2-n)\dfrac{x}{r^n}\). Consequently,
\[
\Delta u + \langle \nabla \ln f, \nabla u \rangle
= \frac{n-2}{r^2} + \frac{1}{r^{2-n}}\left\langle (2-n)\frac{x}{r^n}, \frac{x}{r^2} \right\rangle = 0,
\]
which, together with \eqref{1-function}, shows that \(u\) is proper \(f\)-harmonic.

    \item If \(f(x) = A r^2 + B r^{4-n}\) with \(A, B > 0\) and \(n \neq 4\), then
\[
\Delta\bigl(f\Delta u\bigr) = \Delta\!\left(A(n-2) + B(n-2)r^{2-n}\right) = 0.
\]
By Proposition \ref{pro-1}, \(u\) is therefore proper \(f\)-biharmonic.

    \item If \(f(x) = A r^{6-n} + B r^{4-n}+Cr^4\) with \(A, B, C > 0\) and \(n \notin \{4,6\}\), note that
\[
\nabla\Delta u = -2(n-2)\frac{x}{r^4}.
\]
A direct calculation then yields
\[
f\nabla\Delta u = -2(n-2)\bigl(A r^{2-n} + B r^{-n}+C\bigr)x.
\]
Taking the divergence gives
\[
\operatorname{div}(f\nabla\Delta u) =-2(n-2)(2A r^{2-n}+nC).
\]
Finally,
\[
\Delta\bigl(\operatorname{div}(f\nabla\Delta u)\bigr) = -2(n-2)\Delta(2A r^{2-n}+nC) = 0.
\]
Therefore, by Proposition \ref{3-proposition}, \(u\) is proper \(f\)-3-harmonic.
\end{enumerate}
\end{proof}

\begin{remark}\label{rem:ex-1-general}
Let $s$ be a positive integer and let $n \geq 3$ with $n \notin \{4,6,\dots,2s\}$.
For $u(x)=\ln r$ on $\mathbb{R}^n\setminus\{0\}$ (see Example~\ref{ex-1}), from Lemma~\ref{compute-r} we have
\[
\Delta^s u = (-2)^{s-1}\, (s-1)! \left(\prod_{j=1}^{s}(n-2j)\right) r^{-2s},
\]
\[
\nabla\Delta^s u = (-2)^{s}\, s! \left(\prod_{j=1}^{s}(n-2j)\right) r^{-2(s+1)} x.
\]

Define two positive functions $f_1, f_2$ on $\mathbb{R}^n\setminus\{0\}$ by
\[
f_1(x) = \Bigl( \sum_{i=0}^{s-1} A_i r^{2i} + \sum_{i=1}^{s} B_i r^{2i-n} \Bigr) r^{2s},
\]
\[
f_2(x) = \Bigl( \sum_{i=0}^{s-1} C_i r^{2i} + D_0 r^{-n} + \sum_{i=1}^{s} D_i r^{2i-n} \Bigr) r^{2(s+1)},
\]
where $A_i, B_i, C_i, D_i > 0$ are constants.

Consequently, applying Lemma~\ref{lem:radial_polyharmonic} together with \eqref{eq:f-k-harmonic},
one verifies that $u = \ln r$ is a proper $f_1$-$2s$-harmonic function
and also a proper $f_2$-$(2s+1)$-harmonic function on $\mathbb{R}^n\setminus\{0\}$.
\end{remark}

\begin{example}\label{ex-2}
For \(n \geq 2\), define \(u: \mathbb{R}^n \setminus \{0\} \to \mathbb{R}\) by \(u(x) = r^{-n}\).
\begin{enumerate}
    \item Let \(f: \mathbb{R}^n \setminus \{0\} \to \mathbb{R}^+\) be defined by
\[
f(x) = r^2.
\]
Then \(u\) is a proper \(f\)-harmonic function.

    \item Let \(f: \mathbb{R}^n \setminus \{0\} \to \mathbb{R}^+\) be defined by
\[
f(x) = A r^{n+2} + B r^4,
\]
where \(A, B > 0\) are constants. Then \(u\) is a proper \(f\)-biharmonic function.

    \item Let \(f: \mathbb{R}^n \setminus \{0\} \to \mathbb{R}^+\) be defined by
\[
f(x) = A r^6 + B r^4 + C r^{n+4},
\]
where \(A, B, C > 0\) are constants. Then \(u\) is a proper \(f\)-3-harmonic function.
\end{enumerate}
\end{example}

\begin{proof}
A direct computation yields
\[
\nabla u = -\frac{nx}{r^{n+2}},\quad
\Delta u = u''(r) + \frac{n-1}{r}u'(r) = \frac{2n}{r^{n+2}},\quad
\nabla\Delta u = -2n(n+2)\frac{x}{r^{n+4}},
\]
\[
\Delta^2 u = 2n\left[\left(\frac{1}{r^{n+2}}\right)'' + \frac{n-1}{r}\left(\frac{1}{r^{n+2}}\right)'\right]
= \frac{8n(n+2)}{r^{n+4}},
\]
\[
\Delta^3 u = 8n(n+2)\left[\left(\frac{1}{r^{n+4}}\right)'' + \frac{n-1}{r}\left(\frac{1}{r^{n+4}}\right)'\right]
= \frac{48n(n+2)(n+4)}{r^{n+6}}.
\]
Hence, \(u\) is neither harmonic, nor \(2\)-harmonic, nor \(3\)-harmonic.

\begin{enumerate}
    \item Let \(f(x) = r^2\). Then \(\nabla f = 2x\). Consequently,
\[
\Delta u + \langle \nabla \ln f, \nabla u \rangle
= \frac{2n}{r^{n+2}} + \frac{1}{r^2}\left\langle 2x, -\frac{nx}{r^{n+2}} \right\rangle = 0,
\]
which, together with \eqref{1-function}, shows that \(u\) is a proper \(f\)-harmonic function.

    \item Let \(f(x) = A r^{n+2} + B r^4\) with \(A, B > 0\). Then
\[
\Delta\bigl(f\Delta u\bigr) = 2n\Delta\!\left(A + B r^{2-n}\right) = 0.
\]
By Proposition \ref{pro-1}, \(u\) is therefore a proper \(f\)-biharmonic function.

    \item Let \(f(x) = A r^6 + B r^4 + C r^{n+4}\) with \(A, B, C > 0\). Since
\[
\nabla\Delta u = -2n(n+2)\frac{x}{r^{n+4}},
\]
we obtain
\[
f\nabla\Delta u = -2n(n+2)\bigl(A r^{2-n} + B r^{-n} + C\bigr)x.
\]
Taking the divergence gives
\[
\operatorname{div}(f\nabla\Delta u) = -2n(n+2)(2A r^{2-n} + nC).
\]
Finally,
\[
\Delta\bigl(\operatorname{div}(f\nabla\Delta u)\bigr) = -2n(n+2)\Delta(2A r^{2-n} + nC) = 0.
\]
Therefore, by Proposition \ref{3-proposition}, \(u\) is a proper \(f\)-3-harmonic function.
\end{enumerate}
\end{proof}

\begin{remark}\label{rem:ex-2-general}
Let $n \geq 2$ and $s \in \mathbb{N}^*$.
For $u(x)=r^{-n}$ on $\mathbb{R}^n\setminus\{0\}$ (see Example~\ref{ex-2}), Lemma~\ref{compute-r} yields
\[
\Delta^s u = 2^{s} s! \prod_{j=0}^{s-1}(n+2j) \; r^{-n-2s},
\qquad
\nabla\Delta^s u = -2^{s} s! \prod_{j=0}^{s}(n+2j) \; r^{-n-2(s+1)} x.
\]

Consider first the case where $n$ is odd or $n > 2s$.
Define positive functions $f_1, f_2$ on $\mathbb{R}^n\setminus\{0\}$ by
\begin{align*}
f_1(x) &= \Bigl( \sum_{i=0}^{s-1} A_i r^{2i} + \sum_{i=1}^{s} B_i r^{2i-n} \Bigr) r^{n+2s},\\
f_2(x) &= \Bigl( \sum_{i=0}^{s-1} C_i r^{2i} + D_0 r^{-n} + \sum_{i=1}^{s} D_i r^{2i-n} \Bigr) r^{n+2(s+1)},
\end{align*}
with $A_i, B_i, C_i, D_i > 0$.
Then, by Lemma~\ref{lem:radial_polyharmonic} and \eqref{eq:f-k-harmonic},
$u$ is a proper $f_1$-$2s$-harmonic and $f_2$-$(2s+1)$-harmonic function on $\mathbb{R}^n\setminus\{0\}$.

Now suppose that $n$ is even and $2 \leq n \leq 2s$.
Define positive functions $f_3, f_4$ on $\mathbb{R}^n\setminus\{0\}$ by
\begin{align*}
f_3(x) &= \Bigl( \sum_{i=0}^{s-1} A_i r^{2i} + \sum_{i=1}^{\frac{n}{2}-1} B_i r^{2i-n} \Bigr) r^{n+2s},\\
f_4(x) &= \Bigl( \sum_{i=0}^{s-1} C_i r^{2i} + D_0 r^{-n} + \sum_{i=1}^{\frac{n}{2}-1} D_i r^{2i-n} \Bigr) r^{n+2(s+1)},
\end{align*}
with $A_i, B_i, C_i, D_i > 0$.
Then, by Lemma~\ref{lem:radial_polyharmonic} and \eqref{eq:f-k-harmonic},
$u$ is a proper $f_3$-$2s$-harmonic and $f_4$-$(2s+1)$-harmonic function on $\mathbb{R}^n\setminus\{0\}$.
\end{remark}

\begin{example}[Proper $f$-$k$-harmonic maps of the form $\phi(x)=x r^{\alpha}$]
\label{ex:radial-power-f-k-harmonic}

Let $\phi: \mathbb{R}^n \setminus \{0\} \to \mathbb{R}^n \setminus \{0\}$ be defined by
\[
\phi(x) = x \, r^{\alpha},
\]
with $\alpha \in \mathbb{R}$, and let $f(x) = r^{\beta}$ for some $\beta \in \mathbb{R}$.
Then $\phi$ is a proper $f$-$k$-harmonic map if and only if the parameters $\alpha$ and $\beta$ satisfy the following conditions, depending on the parity of $k$.

\textbf{Case 1: $k = 2s$ with $s \ge 1$.}
The map $\phi$ is proper $f$-$k$-harmonic if and only if
\[
\alpha \notin \bigl\{ 2j,\; 2j - n \mid j = 0,1,\dots,2s-1 \bigr\}
\]
and
\[
\alpha + \beta \in \bigl\{ 2j + 2s,\; 2j + 2s - n \mid j = 0,1,\dots,s-1 \bigr\}.
\]

\textbf{Case 2: $k = 2s+1$ with $s \ge 1$.}
Let $D(q,\beta) := q(n+q+\beta) + \beta$ with $q = \alpha - 2s$.
Then $\phi$ is proper $f$-$k$-harmonic if and only if
\[
\alpha \notin \bigl\{ 2j,\; 2j - n \mid j = 0,1,\dots,2s \bigr\}
\]
and
\[
D(\alpha-2s,\beta) = 0 \quad\text{or}\quad
\alpha + \beta \in \bigl\{ 2j + 2s + 2,\; 2j + 2s + 2 - n \mid j = 0,1,\dots,s-1 \bigr\}.
\]

\end{example}

\begin{proof}
Fix a coordinate index $i\in\{1,\dots,n\}$ and consider the component function
\[
u(x) := \phi_i(x) = x_i r^{\alpha}.
\]
Let $\phi = (\phi_1,\dots,\phi_n)$ with each $\phi_i$ defined as above.
The map $\phi$ is called \emph{proper $f$-$k$-harmonic} if each component $u$ satisfies the $f$-$k$-harmonic equation \eqref{eq:f-k-harmonic} and, in addition, $u$ is not $k$-harmonic, i.e., $\Delta^k u \neq 0$.

Applying \eqref{eq:polyxi} in Lemma \ref{compute-r} to $u = x_i r^{\alpha}$, we obtain
\begin{equation}\label{As}
  \Delta^s u = A_s(\alpha) \, x_i r^{\alpha-2s},
\end{equation}
where
\begin{equation}\label{As-definition}
  A_s(\alpha) := \prod_{j=0}^{s-1} (\alpha-2j)(\alpha+n-2j),
\end{equation}
with the convention $A_0(\alpha) \equiv 1$.

\medskip
\textbf{Case 1.}
If $k = 2s\geq 2$, then the proper $f$-$k$-harmonic equation for $u$ consists of the two conditions
\begin{equation}\label{equation-2s}
  \Delta^s\bigl( f \,\Delta^s u \bigr) = 0 \qquad\text{and}\qquad \Delta^{2s}u = A_{2s}(\alpha) \, x_i r^{\alpha-4s} \neq 0.
\end{equation}

Multiplying \eqref{As} by $f = r^{\beta}$ gives
\[
f \Delta^s u = A_s(\alpha) \, x_i r^{\alpha+\beta-2s}.
\]
Set $p := \alpha + \beta - 2s$. Applying \eqref{As} once more yields
\[
\Delta^s(f \Delta^s u) = A_s(\alpha) A_s(p) \, x_i r^{p-2s}.
\]
Hence the two requirements in \eqref{equation-2s} become
\[
A_s(\alpha) A_s(p) = 0 \qquad\text{and}\qquad A_{2s}(\alpha) \neq 0,
\]
respectively.

The condition $A_{2s}(\alpha) \neq 0$ automatically implies $A_s(\alpha) \neq 0$ because
$A_{2s}(\alpha) = A_s(\alpha) A_s(\alpha-2s)$.
Consequently we must have $A_s(p) = 0$.

By the definition of $A_s$ in \eqref{As-definition}, $A_s(p)=0$ means there exists an integer $j_0$ with $0 \le j_0 \le s-1$ such that
\[
p = 2j_0 \quad\text{or}\quad p =  2j_0 -n.
\]
Substituting $p = \alpha+ \beta - 2s $ gives the equivalent condition
\[
\alpha + \beta = 2j_0 + 2s \quad\text{or}\quad \alpha + \beta = 2j_0 + 2s - n, \qquad 0 \le j_0 \le s-1.
\]

Therefore, for $k = 2s \ge 2$, the map $\phi$ is proper $f$-$k$-harmonic if and only if
\[
\alpha \notin \bigl\{ 2j,\; 2j - n \mid j = 0,1,\dots,2s-1 \bigr\}
\]
and
\[
\alpha + \beta \in \bigl\{ 2j + 2s,\; 2j + 2s - n \mid j = 0,1,\dots,s-1 \bigr\}.
\]

\medskip
\textbf{Case 2.}
If $k = 2s+1\geq 3$, then the proper $f$-$k$-harmonic equation for $u$ becomes
\begin{equation}\label{equation-2s+1}
  \Delta^s\bigl( \operatorname{div}(f \nabla \Delta^s u) \bigr) = 0 \qquad\text{and}\qquad \Delta^{2s+1}u = A_{2s+1}(\alpha) \, x_i r^{\alpha-4s-2} \neq 0.
\end{equation}

Let $v := \Delta^s u$. From \eqref{As} we have $v = A_s(\alpha) \, x_i r^{q}$ with $q := \alpha - 2s$.
A direct computation of the divergence gives
\[
\operatorname{div}\bigl( r^{\beta} \nabla (x_i r^{q}) \bigr) = D(q,\beta) \, x_i r^{\beta+q-2},
\]
where
\[
D(q,\beta) := q(n+q+\beta) + \beta.
\]
Indeed, using $\partial_j(x_i r^{q}) = \delta_{ij} r^{q} + q x_i x_j r^{q-2}$, we obtain
\[
\sum_{j=1}^n \partial_j\bigl( \delta_{ij} r^{\beta+q} \bigr) = (\beta+q) x_i r^{\beta+q-2},
\]
\[
\sum_{j=1}^n \partial_j\bigl( q x_i x_j r^{\beta+q-2} \bigr) = q \bigl( n + \beta+q-1 \bigr) x_i r^{\beta+q-2},
\]
and adding these two expressions yields
\[
\bigl[ (\beta+q) + q(n+\beta+q-1) \bigr] x_i r^{\beta+q-2} = D(q,\beta) \, x_i r^{\beta+q-2}.
\]

Consequently,
\[
\operatorname{div}(f \nabla v) = A_s(\alpha) D(q,\beta) \, x_i r^{\beta+q-2}.
\]
Set $p' := \beta + q - 2 = \beta + \alpha - 2s - 2$. Applying \eqref{As} we obtain
\[
\Delta^s\bigl( \operatorname{div}(f \nabla v) \bigr) = A_s(\alpha) D(q,\beta) A_s(p') \, x_i r^{p'-2s}.
\]

Thus, the two requirements in \eqref{equation-2s+1} reduce to
\[
A_s(\alpha) D(q,\beta) A_s(p') = 0 \qquad\text{and}\qquad A_{2s+1}(\alpha) \neq 0,
\]
respectively.

Since $A_{2s+1}(\alpha)\neq 0$ implies $A_s(\alpha)\neq 0$, we have $D(q,\beta)A_s(p')=0$. By the definition of $A_s(\alpha)$ in \eqref{As-definition}, $A_s(p') = 0$ means there exists an integer $j_0$ with $0 \le j_0 \le s-1$ such that
\[
p' = 2j_0 \quad\text{or}\quad p' = 2j_0 - n.
\]
Substituting $p' = \beta + \alpha - 2s - 2$ gives the equivalent condition
\[
\alpha + \beta = 2j_0 + 2s + 2 \quad\text{or}\quad \alpha + \beta = 2j_0 + 2s + 2 - n, \qquad 0 \le j_0 \le s-1.
\]

Therefore, for $k = 2s+1 \ge 3$, the map $\phi$ is proper $f$-$k$-harmonic if and only if
\[
\alpha \notin \bigl\{ 2j,\; 2j - n \mid j = 0,1,\dots,2s \bigr\}
\]
and
\[
D(\alpha-2s,\beta) = 0 \quad\text{or}\quad \alpha + \beta \in \bigl\{ 2j + 2s + 2,\; 2j + 2s + 2 - n \mid j = 0,1,\dots,s-1 \bigr\}.
\]
The proof is now complete.
\end{proof}

\begin{remark}
\textbf{The case $k=1$ ($s=0$).}
In this case $A_0(\alpha) \equiv 1$, $q = \alpha$, and equation \eqref{equation-2s+1} reduces to
\[
\operatorname{div}(f \nabla u) = 0 \quad \text{and} \quad \Delta u = \alpha(n+\alpha)x_i r^{\alpha-2} \neq 0.
\]
Using $\operatorname{div}(f\nabla u)=0$, a direct computation gives
\[
D(\alpha, \beta):=\alpha(n+\alpha+\beta) + \beta = 0.
\]
Properness excludes the harmonic case $\alpha(n+\alpha)=0$, i.e., $\alpha \notin \{0, -n\}$.
Therefore, the map $\phi: \mathbb{R}^n \setminus \{0\} \to \mathbb{R}^n \setminus \{0\}$ defined by $\phi(x)=x\,r^{\alpha}$ is a proper $f$-harmonic map with $f(x)=r^{\beta}$ if and only if
\[
\alpha(n+\alpha+\beta) + \beta = 0 \quad \text{and} \quad \alpha \notin \{0,-n\}.
\]
\end{remark}

\begin{remark}
\textbf{The case $k=2$ ($s=1$).}
By Case~1 in Example~\ref{ex:radial-power-f-k-harmonic}, the map $\phi: \mathbb{R}^n \setminus \{0\} \to \mathbb{R}^n \setminus \{0\}$ defined by $\phi(x)=x r^{\alpha}$ is a proper $f$-biharmonic map with $f(x)=r^{\beta}$ if and only if
\[
\alpha \notin \{0,2,-n,2-n\}
\]
and
\[
\alpha+\beta \in \{2,\,2-n\}.
\]
In $\mathbb{R}^n$, the inversion with respect to the unit sphere is given by
\[
\phi(x)=x r^{-2}, \qquad x\neq 0,
\]
so that $\alpha=-2$. If $n=2$, then $\phi$ is harmonic; if $n=4$, then $\phi$ is proper biharmonic; if $n\neq 2,4$, then $\phi$ is a proper $f$-biharmonic map with $f=r^4$ or $f=r^{4-n}$.
This characterization of proper $f$-biharmonic maps of radial-power form was first established by Ou (see \cite{Ou2014-f-biharmonic}, Proposition~2.9).
\end{remark}

\section{Liouville-type properties for proper $f$-polyharmonic functions and maps}

It is well known that on a closed (i.e., compact without boundary) Riemannian manifold, any $f$-harmonic function is constant. Indeed, the $f$-Laplace equation $f\Delta u + \langle\nabla f,\nabla u\rangle = 0$ can be rewritten as $\operatorname{div}(f\nabla u) = 0$. Multiplying this by $u$ and integrating over $M$, the divergence theorem yields
\[
0 = \int_M u \operatorname{div}(f\nabla u)\,dV = -\int_M f|\nabla u|^2\,dV.
\]
Since $f > 0$, it follows that $\nabla u \equiv 0$, hence $u$ is constant. For the case $k=2$, Ou \cite{Ou2014-f-biharmonic} proved that every $f$-biharmonic function on $M$ is also constant. In this section, we extend the above Liouville-type property to $f$-$k$-harmonic functions for arbitrary $k \ge 1$.

It is well known that every harmonic, subharmonic, or superharmonic function on a connected closed Riemannian manifold is constant. The following lemma demonstrates that this property carries over to the polyharmonic setting.

\begin{lemma}\label{lem:polyharmonic-constant}
Let $(M,g)$ be a connected closed Riemannian manifold and $u \in C^\infty(M)$.
If for some integer $p \ge 1$ we have
\[
\Delta^p u = 0,\quad\text{or}\quad \Delta^p u \ge 0,\quad\text{or}\quad \Delta^p u \le 0,
\]
then $u$ is constant.
\end{lemma}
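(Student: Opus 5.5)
Integrating over $M$ and running an induction on $p$ will do it; the sign of the Laplacian does not matter because only $\int_M \Delta w\,dv_g=0$ is used.

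\emph{Case $p=1$.} Suppose $\Delta u\ge 0$; the cases $\Delta u=0$ and $\Delta u\le 0$ are the same after replacing $u$ by $-u$. By the divergence theorem on the closed manifold $M$, $\int_M \Delta u\,dv_g=0$. Since the integrand is continuous and nonnegative, $\Delta u\equiv 0$. Multiplying by $u$ and integrating by parts gives $\int_M|\nabla u|^2\,dv_g=0$. Hence $\nabla u\equiv 0$, and $u$ is constant because $M$ is connected.

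\emph{Inductive step.} Let $p\ge 2$ and set $w:=\Delta^{p-1}u$, which is smooth. The hypothesis says $\Delta w$ is $0$, $\ge 0$, or $\le 0$. Applying the case $p=1$ to $w$ shows that $w$ is a constant $c$. We still need $c=0$: since $p-1\ge 1$, we have $w=\Delta(\Delta^{p-2}u)$, so
\[
c\,\mathrm{Vol}(M)=\int_M \Delta\bigl(\Delta^{p-2}u\bigr)\,dv_g=0 .
\]
Thus $\Delta^{p-1}u=0$, and the induction hypothesis (or the case $p=1$ when $p=2$) shows that $u$ is constant.

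The only point that needs care is the passage from ``$\Delta^{p-1}u$ is constant'' to ``$\Delta^{p-1}u=0$''. Without it the induction would not close, since a nonzero constant value of $\Delta^{p-1}u$ carries no information about $u$. It is resolved by the fact that every Laplacian integrates to zero on a closed manifold, which is exactly where compactness without boundary is used a second time.
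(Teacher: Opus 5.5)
Your proof is correct and follows the paper's argument. Both use induction on $p$, and both rely on the same key step: the constant value of $\Delta^{p-1}u$ must vanish because a Laplacian integrates to zero over a closed manifold. The only difference is the base case. You prove it directly, using the divergence theorem and then integrating by parts against $u$, whereas the paper cites the classical maximum principle; this makes your version self-contained.
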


\begin{proof}
We proceed by induction on $p$.
For $p = 1$, the statement follows from the classical maximum principle: every harmonic, subharmonic, or superharmonic function on a connected closed manifold is constant.

Assume the statement holds for $p-1$ with $p \ge 2$, and suppose $\Delta^p u = \Delta(\Delta^{p-1}u)$ satisfies one of the three conditions.
Then $\Delta^{p-1}u$ is respectively harmonic, subharmonic, or superharmonic.
Applying the case $p = 1$ forces $\Delta^{p-1}u$ to be constant, i.e., there exists a constant $C$ such that $\Delta^{p-1}u = C$.

Integrating this equality over $M$ and using the divergence theorem gives
\[
C \cdot \operatorname{Vol}(M) = \int_M \Delta^{p-1}u \, dV = 0,
\]
hence $C = 0$. Thus $\Delta^{p-1}u = 0$, and the induction hypothesis yields that $u$ is constant.
This completes the proof.
\end{proof}

Now, we can prove the following Liouville-type theorem for $f$-$k$-harmonic functions for arbitrary $k \ge 1$.
\begin{theorem}\label{thm:f-k-harmonic-constant}
Every $f$-$k$-harmonic function on a connected closed Riemannian manifold $M$ is constant.
\end{theorem}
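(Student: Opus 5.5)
We work with the scalar form \eqref{eq:f-k-harmonic} of the $f$-$k$-harmonic equation and split according to the parity of $k$, writing $s=\lfloor k/2\rfloor$. The idea is to peel off the outer $\Delta^s$ with Lemma~\ref{lem:polyharmonic-constant}, which leaves a weighted identity to which the energy argument for $f$-harmonic functions applies. Throughout we use that on a closed manifold $\int_M \Delta w\,dV=0$ and that $\Delta$ is symmetric in $L^2$.

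\emph{Even case $k=2s$.} The equation is $\Delta^s(f\Delta^s u)=0$.
\begin{enumerate}
\item Set $w=f\Delta^s u$. Since $\Delta^s w=0$, Lemma~\ref{lem:polyharmonic-constant} shows that $w\equiv C$ is constant.
\item Dividing by $f$ gives $\Delta^s u=C/f$. Integrating over $M$ gives $0=\int_M\Delta^s u\,dV=C\int_M f^{-1}dV$.
\item Since $f>0$, this forces $C=0$. Hence $\Delta^s u=0$, and Lemma~\ref{lem:polyharmonic-constant} again gives that $u$ is constant.
\end{enumerate}

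\emph{Odd case $k=2s+1$.} The equation is $\Delta^s\bigl(\operatorname{div}(f\nabla\Delta^s u)\bigr)=0$. When $s=0$ it reduces to $\operatorname{div}(f\nabla u)=0$, which is handled by the argument recalled at the start of this section. For $s\ge1$ we proceed as follows.
\begin{enumerate}
\item Lemma~\ref{lem:polyharmonic-constant} applied to $\operatorname{div}(f\nabla\Delta^s u)$ shows it equals a constant $C$.
\item By the divergence theorem $C\operatorname{Vol}(M)=0$, so $C=0$.
\item Now $v=\Delta^s u$ satisfies $\operatorname{div}(f\nabla v)=0$. Multiplying by $v$ and integrating gives $\int_M f|\nabla v|^2\,dV=0$, so $v$ is constant.
\item Integrating $\Delta^s u=v$ over $M$ shows this constant is zero. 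Hence $\Delta^s u=0$, and Lemma~\ref{lem:polyharmonic-constant} finishes the proof.
\end{enumerate}

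The only delicate point is the step where a constant on the right-hand side must be shown to vanish. In the even case it is weighted by $1/f$, and this is exactly where positivity of $f$ enters, through $\int_M f^{-1}dV>0$. Otherwise the argument is a direct iteration of Lemma~\ref{lem:polyharmonic-constant}. One should also note that the statement is about $f$-$k$-harmonic functions in the sense of \eqref{eq:f-k-harmonic}. That equation is what Theorem~\ref{variation-F} gives when $N=\mathbb{R}$, because all curvature terms vanish, so no further reduction is needed.
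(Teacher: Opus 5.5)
Your proposal is correct and follows the paper's proof essentially step for step; the odd case is identical. The only difference is in the even case: you eliminate the constant in $\Delta^s u = C/f$ by integrating and using $\int_M f^{-1}\,dV>0$, whereas the paper observes that $C/f$ has a fixed sign and invokes the sub/superharmonic clause of Lemma~\ref{lem:polyharmonic-constant}, which amounts to the same integration argument.
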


\begin{proof}
We consider the even and odd cases separately.

\textbf{Case 1:} $k = 2s$ (even).
By \eqref{eq:f-k-harmonic}, an $f$-$k$-harmonic function $u$ satisfies
\[
\Delta^s(f \Delta^s u) = 0.
\]
Applying Lemma \ref{lem:polyharmonic-constant} to the function $f \Delta^s u$ shows that $f \Delta^s u$ is constant. We write
\[
f \Delta^s u = C_1
\]
for some constant $C_1$.
Since $f > 0$, we have $\Delta^s u = C_1/f$. Consequently, $\Delta^s u$ is either identically zero, strictly positive, or strictly negative on $M$.
A second application of Lemma \ref{lem:polyharmonic-constant} forces $u$ to be constant.

\textbf{Case 2:} $k = 2s+1$ (odd).
We now consider the case $s \ge 1$, as the case $s = 0$ has already been treated at the beginning of this section. In this case, an $f$-$k$-harmonic function $u$ satisfies
\[
\Delta^s\bigl(\operatorname{div}(f \nabla \Delta^s u)\bigr) = 0.
\]
Lemma \ref{lem:polyharmonic-constant} implies that $\operatorname{div}(f \nabla \Delta^s u)$ is constant, which we denote by $C_2$.
Integrating over $M$ and using the divergence theorem yields
\[
C_2 \cdot \operatorname{Vol}(M) = \int_M \operatorname{div}(f \nabla \Delta^s u) \, dV = 0,
\]
which forces $C_2 = 0$. Therefore
\[
\operatorname{div}(f \nabla \Delta^s u) = 0.
\]
Multiplying by $\Delta^s u$ and integrating by parts gives
\[
0 = \int_M \Delta^s u \cdot \operatorname{div}(f \nabla \Delta^s u) \, dV = -\int_M f \, |\nabla \Delta^s u|^2 \, dV.
\]
Since $f > 0$, we deduce $\Delta^s u = C_3$ for some constant $C_3$.
Integrating over $M$ again gives
\[
C_3 \cdot \operatorname{Vol}(M) = \int_M \Delta^s u \, dV = 0,
\]
hence $C_3 = 0$, and we obtain $\Delta^s u = 0$. Finally, Lemma \ref{lem:polyharmonic-constant} applied to $u$ yields that $u$ is constant.
\end{proof}

\begin{corollary}
Let $k \ge 1$ be an integer. Any $f$-$k$-harmonic map $\phi: M \to \mathbb{R}^n$ from a connected closed Riemannian manifold $M$ to Euclidean space is constant.
\end{corollary}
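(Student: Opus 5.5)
The plan is to reduce the corollary to Theorem~\ref{thm:f-k-harmonic-constant} by working componentwise. Since the target is Euclidean space $\mathbb{R}^n$ with its flat metric, we fix standard coordinates $y_1,\dots,y_n$ and write $\phi=(\phi_1,\dots,\phi_n)$. The pullback bundle $\phi^{-1}T\mathbb{R}^n$ is then trivialized by the parallel frame $\{\partial/\partial y_a\}$. Consequently $\overline{\nabla}$ acts on sections coordinatewise, and the rough Laplacian satisfies $\overline{\Delta}\bigl(\sum_a V_a\,\partial/\partial y_a\bigr)=\sum_a(-\Delta V_a)\,\partial/\partial y_a$ under the sign conventions of the paper. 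Moreover $R^{\mathbb{R}^n}\equiv 0$.

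With these observations, every curvature term in the $f$-$k$-tension fields \eqref{f-2s} and \eqref{f-2s+1} vanishes. What remains is only the leading term, $\overline{\Delta}^{\,s}\bigl(f\overline{\Delta}^{\,s-1}\tau(\phi)\bigr)$ in the even case and $\overline{\Delta}^{\,s}\bigl[f\overline{\Delta}^{\,s}\tau(\phi)-\overline{\nabla}_{\nabla f}\overline{\Delta}^{\,s-1}\tau(\phi)\bigr]$ in the odd case. Using $\tau(\phi)=\sum_a(\Delta\phi_a)\,\partial/\partial y_a$, the $a$-th component of $\tau_{f,k}(\phi)$ equals, up to a nonzero sign, exactly the expression $\Delta_{f,k}\phi_a$ in \eqref{eq:f-k-harmonic}. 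In the odd case one also uses $\operatorname{div}(f\nabla w)=f\Delta w+\langle\nabla f,\nabla w\rangle$, with $w=\Delta^s\phi_a$, to match the bracket.

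In other words, this is precisely the computation underlying the Remark preceding \eqref{eq:f-k-harmonic}, carried out with target $\mathbb{R}$, now applied to each coordinate separately. Hence $\phi$ is $f$-$k$-harmonic if and only if each component $\phi_a$ is an $f$-$k$-harmonic function on $M$.

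Finally, Theorem~\ref{thm:f-k-harmonic-constant} applied to each $\phi_a$ shows that every $\phi_a$ is constant on the connected closed manifold $M$, so $\phi$ is constant. The only point requiring care is the componentwise identification of $\tau_{f,k}(\phi)$ with $(\Delta_{f,k}\phi_a)_a$, specifically keeping track of signs and of the $\overline{\nabla}_{\nabla f}$ term in the odd case. Since only the vanishing of each component matters, overall signs are irrelevant, and this step is routine.
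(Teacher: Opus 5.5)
Your proposal is correct and follows the same route as the paper. The paper's proof simply asserts that, by flatness of $\mathbb{R}^n$, each component $\phi^\alpha$ is an $f$-$k$-harmonic function, and then invokes Theorem~\ref{thm:f-k-harmonic-constant}. Your componentwise computation of $\tau_{f,k}(\phi)$ makes that one-line justification explicit, and the signs do work out: the curvature terms vanish, $\overline{\Delta}$ acts as $-\Delta$ on components, and in the odd case you use $\operatorname{div}(f\nabla w)=f\Delta w+\langle\nabla f,\nabla w\rangle$.
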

\begin{proof}
Write $\phi = (\phi^1,\dots,\phi^n)$. Each component $\phi^\alpha$ is an $f$-$k$-harmonic function by the flatness of $\mathbb{R}^n$. Theorem \ref{thm:f-k-harmonic-constant} then forces each $\phi^\alpha$ to be constant, hence $\phi$ is constant.
\end{proof}

\begin{corollary}\label{cor-u}
Let $u: \mathbb{R} \to \mathbb{R}$ be a smooth function and $f: \mathbb{R} \to \mathbb{R}^+$ a smooth positive function. Then $u$ is $f$-$k$-harmonic if and only if it can be expressed as
\begin{align}\label{u(x)}
  u(x)=&\underbrace{\int\cdots\int}_{k \text{ times}}\frac{C_{k-1}x^{k-1}
+\cdots +C_2x^2+C_1 x+ C_0}{f}\,(dx)^k \\[2mm]
   & + D_{k-1}x^{k-1}+\cdots +D_2x^2+D_1 x+ D_0, \nonumber
\end{align}
where $C_{k-1},\dots,C_0,D_{k-1},\dots,D_0$ are arbitrary constants.
\end{corollary}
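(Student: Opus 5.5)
On $M=\mathbb{R}$ with the flat metric, the Beltrami Laplacian is $\Delta u=u''$, the divergence of a vector field $h\,\partial_x$ is $h'$, and $\nabla u=u'\partial_x$. The plan is therefore to specialise the defining equation \eqref{eq:f-k-harmonic} to one dimension, check that in both parities it becomes
\[
\bigl(f\,u^{(k)}\bigr)^{(k)}=0,
\]
and then integrate twice.

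\textbf{Even case $k=2s$.} Here $\Delta^s u=u^{(2s)}$, so $\Delta^s(f\Delta^s u)=(f u^{(2s)})^{(2s)}$, which is the displayed equation.

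\textbf{Odd case $k=2s+1$.} Here $\nabla\Delta^s u=u^{(2s+1)}\partial_x$, so $\operatorname{div}(f\nabla\Delta^s u)=(f u^{(2s+1)})'$. Applying $\Delta^s$ then gives $(f u^{(2s+1)})^{(2s+1)}$, again the displayed equation. The case $s=0$ is included and is simply the $f$-harmonic equation $(fu')'=0$. This reduction is the only point needing care: one must confirm that the odd-order formula in \eqref{eq:f-k-harmonic} really collapses to $k$ derivatives on each side of $f$. For the flat target $\mathbb{R}$ all curvature terms of Theorem \ref{variation-F} vanish, so the remark's formula applies directly.

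\textbf{Integration.} Since $f>0$ is smooth, the equation $(f u^{(k)})^{(k)}=0$ on the connected interval $\mathbb{R}$ holds if and only if $f u^{(k)}$ is a polynomial of degree at most $k-1$. That is,
\[
u^{(k)}=\frac{C_{k-1}x^{k-1}+\cdots+C_1x+C_0}{f}.
\]
Integrating $k$ times, with each integral taken as a definite integral from $0$ so that the result is a concrete function, gives one particular solution. The general solution of $u^{(k)}=g$ differs from a particular one by an arbitrary polynomial of degree at most $k-1$; this polynomial is the $D$-part in \eqref{u(x)}.

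\textbf{Converse.} Any $u$ of the form \eqref{u(x)} satisfies $f u^{(k)}=\sum_i C_i x^i$, and differentiating $k$ times gives zero. So $u$ satisfies \eqref{eq:f-k-harmonic}, which proves the equivalence.
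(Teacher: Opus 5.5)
Your proposal is correct and follows essentially the same route as the paper: reduce the defining equation on $\mathbb{R}$ to $(f u^{(k)})^{(k)}=0$, integrate $k$ times to make $f u^{(k)}$ a polynomial of degree at most $k-1$, divide by $f>0$, integrate $k$ times again, and verify the converse by substitution. You also write out two details the paper leaves implicit: the explicit even/odd reduction, and the fact that the $D$-polynomial is exactly the kernel of $d^k/dx^k$.
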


\begin{proof}
By \eqref{eq:f-k-harmonic}, $u$ is $f$-$k$-harmonic iff $(f u^{(k)})^{(k)} = 0$.
Integrating $k$ times gives
\[
f(x)\, u^{(k)}(x) = C_{k-1}x^{k-1} + \cdots + C_1 x + C_0.
\]
Since $f(x)>0$, we have
\[
u^{(k)}(x) = \frac{C_{k-1}x^{k-1} + \cdots + C_0}{f(x)}.
\]
Integrating $k$ times yields exactly the expression in \eqref{u(x)}.

Conversely, substituting the form \eqref{u(x)} into $(f u^{(k)})^{(k)}$ clearly gives zero, so $u$ is $f$-$k$-harmonic.
\end{proof}
\begin{remark}
Corollary \ref{cor-u} generalizes Corollary 2.7 in \cite{Ou2014-f-biharmonic}. In particular, when $f(x)=e^{-\alpha x}$ for a constant $\alpha$, one easily checks that $u$ is proper $f$-$k$-harmonic if and only if it can be written as
\[
u(x)=P_{k-1}(x)e^{\alpha x}+Q_{k-1}(x),
\]
where $P_{k-1}$ and $Q_{k-1}$ are arbitrary polynomials of degree at most $k-1$. Hence, one obtains a large family of proper $f$-$k$-harmonic functions on the Euclidean line.
\end{remark}

\medskip\noindent
{\bf Data availability:} No new data were generated or analysed during this study.



\end{document}